\newtheorem{defi}{Definition}
\newtheorem{thm}{Theorem}
\newtheorem{prop}[defi]{Proposition}
\newtheorem{remq}[defi]{Remark}
   \newenvironment{rmq}{\begin{remq}\rm}{\end{remq}}
\newtheorem{lem}[defi]{Lemma}
\newcommand{\nlpsps}[1]{\|#1\|_{p+1}^{p+1}}
\newcommand{\nld}[1]{\|#1\|_2}
\newcommand{\nldd}[1]{\|#1\|_2^2}
\newcommand{\nhu}[1]{\|#1\|_{\hu}}
\newcommand{\nhud}[1]{\|#1\|_{\hu}^2}
\newcommand{\nhurd}[1]{\|#1\|_{\hurd}}
\newcommand{\dual}[2]{\left< #1,#2 \right>}
\renewcommand{\a}{\alpha}
\renewcommand{\b}{\beta}
\renewcommand{\d}{\delta}
\newcommand{\e}{\varepsilon}
\newcommand{\f}{\varphi}
\newcommand{\g}{\gamma}
\renewcommand{\l}{\lambda}
\newcommand{\m}{\mu}
\newcommand{\n}{\nu}
\renewcommand{\r}{\rho}
\renewcommand{\t}{\theta}
\newcommand{\w}{\omega}
\newcommand{\D}{\Delta}
\newcommand{\G}{\Gamma}
\renewcommand{\L}{\Lambda}
\newcommand{\C}{\mathbb{C}}
\newcommand{\N}{\mathbb{N}}
\newcommand{\R}{\mathbb{R}}
\newcommand{\Rd}{\mathbb{R}^{2}}
\newcommand{\RN}{\mathbb{R}^N}
\newcommand{\ld}{L^2(\RN)}
\newcommand{\ldradrd}{L^2_{{\rm{rad}}}(\Rd)}
\newcommand{\ldrad}{L^2_{{\rm{rad}}}(\RN)}
\renewcommand{\lq}{L^q(\RN)}
\newcommand{\hu}{H^1(\RN)}
\newcommand{\hurad}{H^1_{{\rm{rad}}}(\RN)}
\newcommand{\huradrd}{H^{1}_{{\rm{rad}}}(\Rd)}
\newcommand{\hurd}{H^{1}(\Rd)}
\newcommand{\Kab}{K_{\a,\b}}
\newcommand{\uo}{u_0}
\newcommand{\ue}{u_{\e}}
\newcommand{\ut}{u_{t}}
\newcommand{\utt}{u_{tt}}
\newcommand{\utb}{\overline{\ut}}
\newcommand{\vo}{v_{0}}
\newcommand{\vl}{v_{\l}}
\newcommand{\vlo}{v_{\lo}}
\newcommand{\fw}{\f_{\w}}
\newcommand{\fl}{\f_{\l}}
\newcommand{\flm}{\f_{\l,\m}}
\newcommand{\fe}{\f_{\e}}
\newcommand{\tz}{t_{0}}
\newcommand{\la}{\l^{\a}}
\newcommand{\lb}{\l^{\b}}
\newcommand{\lo}{\l_{0}}
\newcommand{\lm}{\l_{\m}}
\newcommand{\Lm}{\L_{\m}}
\newcommand{\Ca}{C_{\a}}
\newcommand{\scrKab}{\mathscr{K}_{\a,\b}}
\newcommand{\scrP}{\mathscr{P}}
\newcommand{\I}{\mathcal{I}}
\renewcommand{\Re}{\mathrm{Re}}
\renewcommand{\leq}{\leqslant}
\renewcommand{\geq}{\geqslant}
\newcommand{\goestoweak}{\rightharpoonup}
\newcommand{\intrn}{\int_{\RN}}
\newcommand{\intrd}{\int_{\Rd}}
\begin{document}

\subjclass[2000]{35Q53,(35B35,35A15,35Q51)}

\title[Instability via mountain-pass arguments]
{Instability for standing waves of nonlinear Klein-Gordon equations via mountain-pass arguments}

\date\today

\author{Louis JEANJEAN and Stefan LE COZ}
\address[Louis JEANJEAN \and Stefan LE COZ]{ Laboratoire de Math\'{e}matiques,
\endgraf
Universit\'{e} de Franche-Comt\'{e},\endgraf 25030 Besan\c{c}on CEDEX,\endgraf FRANCE.}
\email{louis.jeanjean@univ-fcomte.fr} \email{slecoz@univ-fcomte.fr}

\begin{abstract}
We introduce mountain-pass type arguments in the context of orbital instability
for Klein-Gordon equations. Our aim is to illustrate on two examples how these
arguments can be useful to simplify proofs and derive new results of orbital
stability/instability. For a power-type nonlinearity, we prove that the ground
states of the associated stationary equation are minimizers of the functional
\emph{action} on a wide variety of constraints. For a general nonlinearity, we
extend to the dimension $N=2$ the classical instability result for stationary
solutions of nonlinear Klein-Gordon equations proved in 1985 by Shatah in
dimension $N\geq3$.
\end{abstract}

\maketitle

\section{Introduction}

The aim of the present paper is to show how recent methods and results
concerning the variational characterizations of the ground states for elliptic
equations of the form
\begin{equation}\label{eq:nlsf1}
-\D \f = g(\f),  \quad \f \in H^1(\R^N; \C)
\end{equation}
can be used to study the orbital stability/instability of the standing waves of
various nonlinear equations such as Schr\"{o}dinger equations, Klein-Gordon
equations, generalized Boussinesq equations, etc. Our work is motivated by
recent developments (see for instance \cite{cg,li,lot,lww,ot1,ot2}) of the
techniques introduced by Berestycki and Cazenave \cite{bc} to prove the
instability of standing waves for nonlinear evolution equations. We present our
approach on two examples involving nonlinear Klein-Gordon equations of the form
\begin{equation}\label{nlkgf}
\utt-\D u+\r u=f(u)
\end{equation}
where $\r>0$, $u:\R\times\RN\mapsto\C$ and $f:[0,+\infty)\mapsto\R$ is extended to $\C$ by setting $f(z)=f(|z|)z/|z|$ for $z\in\C\setminus\{0\}$.

A standing wave of (\ref{nlkgf}) is a solution of the form $e^{i\w t}\fw(x)$
for $\w\in\R$ and $\fw\in H^1(\R^N; \C)$. Thus $\fw$ satisfies
\begin{equation}\label{eq:snlkgf}
-\D \fw +(\r-\w^2)\fw- f(\fw)=0.
\end{equation}
Clearly, (\ref{eq:snlkgf}) is of the form (\ref{eq:nlsf1}). From now on we
write $\hu$ for $H^1(\R^N; \C)$. The \emph{least energy level} $m$ is defined
by
\begin{equation}\label{eq:m}
m:=\inf\{S(v)\big|v\in\hu\setminus\{0\},\,v\mbox{ is a solution of (\ref{eq:nlsf1})} \}
\end{equation}
where $S:\hu\mapsto\R$ is the natural functional (often called \emph{action}) corresponding to (\ref{eq:nlsf1})
$$
S(v):=\frac{1}{2}\nldd{\nabla v}-\intrn G(v)dx,
$$
with $G(s):=\int_0^{|s|} g(t)dt$. A solution $\f\in\hu$ of (\ref{eq:nlsf1}) is said to be a \emph{ground state}, or \emph{least energy solution}, if
$$
S(\f)=m.
$$
The study of the existence for solutions of (\ref{eq:nlsf1}) goes back to the
work of Strauss \cite{st} (see also \cite{cgm}). The most general result in
that direction is due to Berestycki and Lions \cite{bl} for $N=1$ and $N\geq3$
and Berestycki, Gallouet and Kavian \cite{bgk} for $N=2$.

The assumptions of \cite{bgk,bl} when $N \geq 2$ are :
\begin{itemize}
\item[(g0)] $g$ is continuous and odd,
\item[(g1)] if $N\geq 3$, $\displaystyle-\infty<\liminf_{s\to 0}\frac{g(s)}{s}\leq\limsup_{s\to 0}\frac{g(s)}{s}<0,$\\
if $N=2$, $\displaystyle-\infty<\lim_{s\to 0}\frac{g(s)}{s}:=-\r<0$,
\item[(g2)] if $N\geq3$, $\displaystyle\lim_{s\to+\infty}\frac{g(s)}{s^{\frac{N+2}{N-2}}}=0$,\\
if $N=2$, $\forall \a>0$ $\exists \Ca>0$ such that $|g(s)|\leq \Ca e^{\a s^2}$ $\forall s>0$.
\item[(g3)] there exists $\xi_0>0$ such that $G(\xi_0)>0$.
\end{itemize}
It is known that the assumptions (g0)-(g3) are almost optimal to insure the
existence of a solution of  (\ref{eq:nlsf1}) (see \cite[Section 2.2]{bl}). In
\cite{bgk,bl} it is proved that for $N\geq2$ and under (g0)-(g3) there exists a
positive radial least energy solution $\f$ of (\ref{eq:nlsf1}) when the infimum
in (\ref{eq:m}) is taken over the solutions belonging to $H^1(\R^N, \R)$.
Moreover it is easily deduce from the proofs in \cite{bgk,bl} that this $\f$ is
still a least energy solution of (\ref{eq:nlsf1}) when the infimum is, as in
(\ref{eq:m}), taken over the set of all complex valued solutions. See
\cite{cjs} for a proof of this statement along with a description of the ground
states as being of the form $U = e^{i \theta} \tilde{U}$ where $\t\in\R$ and $\tilde{U}$ is a
real positive ground state solution of (\ref{eq:nlsf1}).

In dimension $N=1$, the assumptions in \cite{bl} are 
\begin{itemize}
\item[(h0)] $g$ is locally Lipschitz continuous and $g(0)=0$,
\item[(h1)] there exists $\eta_0>0$ such that
$$
G(s)<0\text{ for all }s\in(0,\eta_0),\;G(\eta_0)=0,\;g(\eta_0)>0
$$
\end{itemize}
and it is proved in \cite{bl} that under (h0) the condition (h1)
is necessary and sufficient to guarantee the existence of a unique (up to
translation) real positive solution of (\ref{eq:nlsf1}). Here also, it can be shown (see
\cite{cjs}) that the least energy levels coincide for complex and real valued
solutions of (\ref{eq:nlsf1}).

Since the pioneer works \cite{bc,cl}, it is known that the
stability/instability of the standing waves is closely linked to additional
variational characterizations that the associated ground states enjoy.
Recently, in \cite{jt1} for $N\geq2$ and in \cite{jt2} for $N=1$, Jeanjean and
Tanaka showed that, under the conditions (g0)-(g3) for $N\geq2$ and basically
(h0)-(h1) for $N=1$, the functional $S$ admits a mountain pass geometry.
Precisely they show that setting
\begin{equation}\label{Gamma}
\G:=\{ \g\in\mathcal{C}([0,1],\hu),\g(0)=0,\,S(\g(1))<0 \}
\end{equation}
one has $\G \neq \emptyset$ and
\begin{equation}\label{eq:c}
c:=\inf_{\g\in\G}\max_{t\in[0,1]}S(\g(t)) >0.
\end{equation}
Furthermore, they proved that 
$$
c=m,
$$
namely that the mountain pass value gives
the least energy level. In fact, the results of \cite{jt1,jt2} are proved within
the space $H^1(\R^N, \R)$ but it is straightforward to show, see Lemma
\ref{complex}, that this equality also holds in $\hu$.

In this paper, we will show, by studying two specific problems, how the ideas
and methods developed in \cite{jt1,jt2} can be implemented in the context of
instability by blow-up for nonlinear Klein-Gordon equations.

First, working with a nonlinearity of power type ($f(s)=|s|^{p-1}s$) we find a
set of constraints on which the ground states are minimizers of $S$. In
particular, this gives an alternative, much simpler proof of results in
\cite{lot,ot1,ot2} concerning the derivation of an additional variational
characterization of the ground states. Precisely, we prove
\begin{thm}\label{thm:snlkgp}
Let $\a,\b\in\R$ be such that
\begin{equation}\label{eq:rangeofab}
\left\{
\begin{array}{rcl}
&\b<0,&\a(p-1)-2\b\geq0\mbox{ and }2\a-\b(N-2)>0\\
\mbox{or}&\b\geq0,&\a(p-1)-2\b\geq0\mbox{ and }2\a-\b N>0.
\end{array}
\right.
\end{equation}
Let $\w\in(-1,1)$ and $\fw\in\hu$ be a ground state solution of
\begin{equation*}
-\D \fw +(1-\w^2)\fw- |\fw|^{p-1}\fw=0.
\end{equation*}
Then
$$
S(\fw)=\min\{S(v)\big|v\in\hu\setminus\{0\},\Kab(v)=0\}
$$
where
\begin{eqnarray*}
S(v)&:=&\frac{1}{2}\nldd{\nabla v}+\frac{1-\w^2}{2}\nldd{v}-\frac{1}{p+1}\nlpsps{v}. \\
\Kab(v)&:=&\textstyle\frac{2\a-\b(N-2)}{2}\nldd{\nabla v}+\frac{(2\a-\b N)(1-\w^2)}{2}\nldd{v}-\frac{\a(p+1)-\b N}{p+1}\nlpsps{v}.
\end{eqnarray*}
\end{thm}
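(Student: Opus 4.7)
The plan is to combine the mountain-pass identity $c = m$ of \cite{jt1,jt2} (extended to the complex-valued setting by Lemma \ref{complex}) with a scaling construction that produces, for each admissible $v$, a mountain-pass path in $\G$ along which $S$ attains its maximum precisely at $v$. For $v \in \hu \setminus \{0\}$ and $\l > 0$, set $v_\l(x) := \l^{\a} v(\l^{\b} x)$. A change of variable yields
$$
h(\l) := S(v_\l) = A \l^{a_1} + B \l^{a_2} - C \l^{a_3},
$$
with $A := \tfrac{1}{2}\nldd{\nabla v}$, $B := \tfrac{1-\w^2}{2}\nldd{v}$, $C := \tfrac{1}{p+1}\nlpsps{v}$, $a_1 := 2\a - \b(N-2)$, $a_2 := 2\a - \b N$, $a_3 := \a(p+1) - \b N$. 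The semigroup identity $(v_\l)_\m = v_{\l\m}$ gives $\Kab(v_\l) = \l h'(\l)$, so $\Kab(v) = 0 \Longleftrightarrow h'(1) = 0$; applied to $\fw$, this shows $\Kab(\fw) = 0$ (since $\partial_\l (\fw)_\l|_{\l=1} \in \hu$ is a legitimate test direction and $dS(\fw) = 0$), whence $\inf_{\Kab = 0} S \leq S(\fw) = m$.

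For the reverse inequality, fix $v \in \hu \setminus \{0\}$ with $\Kab(v) = 0$. The heart of the proof is to establish the following shape of $h$ under (\ref{eq:rangeofab}): (i) $h(\l) \to 0$ as $\l \to 0^+$; (ii) $\l = 1$ is the unique critical point of $h$ on $(0, +\infty)$, and is a strict global maximum; (iii) there exists $\l_1 > 1$ with $h(\l_1) < 0$. Granting (i)--(iii), a reparametrization of $\l \in [0, \l_1]$ onto $[0,1]$ extended continuously by $\g(0) := 0$ (valid since $a_1, a_2 > 0$ imply $\|v_\l\|_\hu \to 0$ as $\l \to 0^+$) produces a path $\g \in \G$ with
$$
\max_{t \in [0,1]} S(\g(t)) = \max_{\l \in (0, \l_1]} h(\l) = h(1) = S(v),
$$
whence $c \leq S(v)$, and with $c = m$ this yields $S(\fw) \leq S(v)$.

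The main obstacle is establishing (ii) and (iii). Condition (\ref{eq:rangeofab}) is tailored for this: the inequality $\a(p-1) - 2\b \geq 0$ is exactly $a_3 \geq a_1$, while the accompanying strict inequality forces $a_1 > 0$ and $a_2 > 0$. I would address (ii) by rewriting $h'(\l) = 0$ as
$$
a_1 A \l^{a_1 - a_3} + a_2 B \l^{a_2 - a_3} = a_3 C
$$
and proving strict monotonicity of the left-hand side, splitting cases on the sign of $\b$ and, when $\b < 0$, the sign of $\a$; the degenerate case $a_3 = a_1$ must be treated separately, where $h$ reduces to a two-term expression whose maximum at $\l = 1$ follows directly from the remaining strict inequality in (\ref{eq:rangeofab}). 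Statement (iii) then follows from (ii) together with $h'(1) = 0$, the latter forcing $C$ to be large enough relative to $A, B$ for $h$ to become negative along the branch $\l \mapsto v_\l$.
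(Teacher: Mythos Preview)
Your proposal follows exactly the paper's approach: the scaling $v_\l(x)=\l^{\a}v(\l^{\b}x)$ is used to build, for each $v\in\scrKab$, a path in $\G$ whose maximum is $S(v)$ (this is Lemma~\ref{lem:path} in the paper), and combining with $c=m$ from Lemma~\ref{complex} then yields both inequalities. The paper organizes the monotonicity step slightly differently---factoring $\l^{a_2-1}$ out of $h'(\l)$ and reading off the sign of $\l^{2\b}A'+B'-\l^{\a(p-1)}C'$ rather than rewriting $h'(\l)=0$ as you do---but the content, including the role of the conditions $a_3\geq a_1$ and $a_1,a_2>0$, is the same.
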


The functional $\Kab$ is based on the rescaling $\vl(\,\cdot\,):=\la v(\lb\,\cdot\,)$ for $v\in\hu$, precisely,
$
\Kab(v)=\frac{\partial}{\partial \l}S(\vl)_{|\l=1}.
$
The main idea of the proof of Theorem \ref{thm:snlkgp} is to use rescaled functions to construct for any $v\in\hu$ such that $\Kab(v)=0$ a path in $\G$ attaining his maximum at $v$.

It is also of interest to consider a limit case of Theorem \ref{thm:snlkgp}.
\begin{thm}\label{thm:limitcase}
Let $\a,\b\in\R$ be such that
\begin{equation}\label{eq:limitcaseofab}
\left\{
\begin{array}{rcl}
&\b<0,&\a(p-1)-2\b\geq0\mbox{ and } 2\a-\b(N-2)=0\\
\mbox{or}&\b>0,&\a(p-1)-2\b\geq0\mbox{ and } 2\a-\b N=0.
\end{array}
\right.
\end{equation}
Let $\w\in(-1,1)$ and $\fw$ be a ground state solution of
\begin{equation*}
-\D \fw +(1-\w^2)\fw- |\fw|^{p-1}\fw=0.
\end{equation*}
Then
$$
S(\fw)=\min\{S(v)\big|v\in\hu\setminus\{0\},\Kab(v)=0\}.
$$
\end{thm}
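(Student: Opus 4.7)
The plan is to deduce Theorem \ref{thm:limitcase} from Theorem \ref{thm:snlkgp} by a short limit argument.

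A direct computation using the Nehari identity $I_1(v):=\nldd{\nabla v}+(1-\w^2)\nldd{v}-\nlpsps{v}$ and the Pohozaev identity $I_2(v):=\frac{N-2}{2}\nldd{\nabla v}+\frac{N(1-\w^2)}{2}\nldd{v}-\frac{N}{p+1}\nlpsps{v}$ associated with the equation satisfied by $\fw$ yields $K_{\a,\b}=\a I_1-\b I_2$. Since $I_1(\fw)=I_2(\fw)=0$, this gives $K_{\a,\b}(\fw)=0$, so $\fw$ already belongs to the constraint set and it only remains to prove $S(\fw)\leq S(v)$ for every $v\in\hu\setminus\{0\}$ with $K_{\a,\b}(v)=0$.

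Given such a $v$, I would pick $(\a_n,\b_n)$ in the strictly open region (\ref{eq:rangeofab}) with $(\a_n,\b_n)\to(\a,\b)$; concretely one may take $\a_n:=\a+1/n$, $\b_n:=\b$ in limit case 1 and $\a_n:=\a$, $\b_n:=\b-1/n$ in limit case 2, each of which is readily checked to satisfy the strict conditions of (\ref{eq:rangeofab}). The goal is then to perturb $v$ into some $v_n\in\hu\setminus\{0\}$ with $K_{\a_n,\b_n}(v_n)=0$ and $v_n\to v$ in $\hu$. A single-parameter rescaling $v_n:=\sigma_n v$ is enough: writing
\[
A_n:=\frac{2\a_n-\b_n(N-2)}{2}\nldd{\nabla v}+\frac{(2\a_n-\b_n N)(1-\w^2)}{2}\nldd{v},
\]
\[
B_n:=\frac{\a_n(p+1)-\b_n N}{p+1}\nlpsps{v},
\]
the equation $K_{\a_n,\b_n}(\sigma_n v)=0$ reduces to $\sigma_n^{p-1}=A_n/B_n$. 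The structure of (\ref{eq:limitcaseofab}) forces $A_n,B_n>0$ for $n$ large, and $K_{\a,\b}(v)=0$ forces their limits to coincide, so that $\sigma_n\to 1$ and $v_n\to v$ in $\hu$. Applying Theorem \ref{thm:snlkgp} to the pair $(\a_n,\b_n)$ then yields $S(\fw)\leq S(v_n)$, and continuity of $S$ on $\hu$ gives $S(\fw)\leq S(v)$ in the limit.

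The argument is essentially a continuity statement on top of Theorem \ref{thm:snlkgp} and requires no new mountain-pass construction. The one point that needs care is checking strict positivity of the limits of $A_n$ and $B_n$; this is where the surviving strict sign assumptions in (\ref{eq:limitcaseofab}) come into play (one of $2\a-\b(N-2)$, $2\a-\b N$ vanishes while the other and $\a(p-1)-2\b$ stay positive, and $v\neq 0$ ensures the corresponding norms of $v$ do not vanish).
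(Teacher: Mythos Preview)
Your argument is correct and gives a genuinely different route from the paper's own proof. The paper does \emph{not} deduce Theorem~\ref{thm:limitcase} from Theorem~\ref{thm:snlkgp} by approximation; instead it proves a companion to Lemma~\ref{lem:path} (their Lemma~\ref{lem:pathlimitcase}) by constructing, for each $v\in\scrKab$, an explicit path in $\G$ reaching its maximum at $v$. In the limit case the pure rescaling $v_\lambda=\lambda^\alpha v(\lambda^\beta\cdot)$ no longer connects to $0$ in $\hu$ (one of $\|v_\lambda\|_2$, $\|\nabla v_\lambda\|_2$ stays constant as $\lambda\to0$), so they glue the rescaling piece $\{v_\lambda:\lambda\in[\lambda_0,C]\}$ to a linear segment $\{t\,v_{\lambda_0}:t\in[0,1]\}$, choosing $\lambda_0$ small so that $t\mapsto S(t\,v_{\lambda_0})$ is increasing on $[0,1]$; the equality $c=m$ then finishes the proof exactly as for Theorem~\ref{thm:snlkgp}.

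Your continuity argument is shorter and avoids any new path construction, at the cost of invoking Theorem~\ref{thm:snlkgp} as a black box. The paper's approach, by contrast, produces an explicit mountain-pass path through each point of the limit constraint, which is in the spirit of the paper's theme and potentially reusable. One small point in your write-up: you assert that $\alpha(p-1)-2\beta$ ``stays positive'', while (\ref{eq:limitcaseofab}) only gives $\geq0$; this is harmless for your proof since you actually use $B=A>0$ (forced by $K_{\alpha,\beta}(v)=0$ together with $A>0$) rather than positivity of that coefficient directly, but you may want to phrase it that way. Also note that $\|\nabla v\|_2>0$ for $v\in\hu\setminus\{0\}$ (needed in the case $2\alpha-\beta N=0$) uses that a function in $H^1(\R^N)$ with vanishing gradient is identically zero.
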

\begin{rmq}
Looking to the proofs of Theorems \ref{thm:snlkgp} and \ref{thm:limitcase} one
see that our Theorems remain unchanged when $(1- \omega^2)$ is replaced by any
$m>0$. We choose however to present our results in the setting of
\cite{lot,ot1,ot2}.
\end{rmq}
For $(\a,\b)=(\frac{N}{2},1) $, Theorem \ref{thm:limitcase} gives a simpler
proof of a variational characterization of the ground state proved by
Berestycki and Cazenave \cite{bc} for $1+\frac{4}{N}<p<1+\frac{4}{N-2}$ and by
Nawa \cite[Proposition 2.5]{n} for $p=1+\frac{4}{N}.$ This characterization is
at the heart of the classical result of Berestycki and Cazenave \cite{bc}
dealing with the instability of the ground states of nonlinear Schr\"{o}dinger
equations.

For our second direction of application we consider the instability of the
stationary solutions of
\begin{equation}\label{eq:nlkgg1}
\utt-\D u=g(u).
\end{equation}
In 1985, Shatah established in \cite{sh} that under the conditions (g0)-(g3)
the radial ground states solutions associated with the standing waves
corresponding to $\w=0$ are unstable when $N\geq3$. Under stronger hypothesis,
but in any dimension and for non necessary radial solutions, Berestycki and
Cazenave \cite{bc} had previously proved that these ground states are unstable
by blow up in finite time. In \cite{sh}, instability may occur by blow up in
infinite time, in the sense that the $\hu$-norm of a solution starting close to
a ground state goes to infinity when $t\to+\infty$. Here, we show that the same
result hold when $N=2$.

We make the following hypothesis on the existence and properties of solutions for (\ref{eq:nlkgg1}).

\noindent\textbf{Assumption H.}
\textit{
For all $(\uo,\vo)\in\huradrd\times\ldradrd$ there exist $0<T\leq+\infty$ and $u:[0,T)\times\Rd\to\C$ such that
\begin{itemize}
\item $(u(0),u_t(0)) = (\uo, \vo)$,
\item $u$ (resp. $\ut$) is weakly continuous in $\huradrd$ (resp. $\ldradrd$),
\item $u$ satisfies (\ref{eq:nlkgg1}) in the sense of distributions,
\item $E(u(t),u_t(t))\leq E(\uo,\vo)$ for all $t\in[0,T)$ (\emph{energy inequality}),
\item if $T<+\infty$, there exists $(t_n)\subset[0,T)$ such that $t_n\to T$ as $n\to+\infty$ and $\lim_{t_n\to T}\nhurd{u(t_n)}=+\infty$ (\emph{blow-up alternative}),
\end{itemize}
}
The \emph{energy functional} $E$ is defined for $u\in\hu$ and $v\in\ld$ by
$$
E(u,v):=\frac{1}{2}\nldd{v}+\frac{1}{2}\nldd{\nabla u}-\intrd G(u)dx.
$$
In what follows, as above, we write $\hurad$ (resp. $\ldrad$) for the space of radial
functions of $\hu$ (resp. $\ld$).

\begin{rmq}
When $N\geq 3$, Shatah claims that Assumption H holds under (g0)-(g3) without
any additional restrictions. For others dimensions, Assumption H is known to
hold under stronger assumptions on $g$, see, for example, \cite[Chapter 6]{ch}.
From now on a solution of (\ref{eq:nlkgg1}) with initial data $(\uo,\vo)$ will
refer to a solution of (\ref{eq:nlkgg1}) with initial data $(\uo,\vo)$ as given
by Assumption H.
\end{rmq}

Our third main result is the following
\begin{thm}\label{thm:instability}
Assume $N=2$, (g0)-(g3) and Assumption H. Let $\f$ be a radial ground state of
(\ref{eq:nlsf1}). Then $\f$ viewed as a stationary solution of
(\ref{eq:nlkgg1}) is strongly unstable. Namely for all $\e>0$ there exist
$\ue\in\hurd$, $T_\e\in(0,+\infty]$ and $(t_n)\subset(0,T_\e)$ such that
$\nhurd{\f-\ue}<\e$ and $\lim_{t_n\to T_\e}\nhurd{u(t_n)}=+\infty$, where
$u(t)$ is a solution of (\ref{eq:nlkgg1}) with initial data $(\ue,0)$.
\end{thm}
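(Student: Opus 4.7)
The plan is to follow Shatah's invariant-set strategy, using the mountain-pass characterization of $\f$ from \cite{jt1} to compensate for the degeneracy of the standard rescaling argument in dimension two. The proof decomposes into four ingredients: (i) a variational characterization of $m$; (ii) the construction of perturbed initial data $\ue$; (iii) invariance of a ``past-the-mountain'' set under the Klein-Gordon flow; (iv) a compactness argument producing the blow-up.

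First, I would establish, as the Klein-Gordon counterpart of Theorem \ref{thm:limitcase} for the general nonlinearity $g$, the identity
$$
m \;=\; \inf\{S(v) : v\in\huradrd\setminus\{0\},\ \scrP(v)=0\},\qquad \scrP(v):=\intrd G(v)\,dx.
$$
The Pohozaev identity in $N=2$ forces $\scrP(\f)=0$, so $\f$ is admissible. For the reverse inequality, given $v\in\huradrd\setminus\{0\}$ with $\scrP(v)=0$, a Jeanjean-Tanaka-type construction combining the dilation $\vl(x):=v(x/\l)$ (which in $N=2$ preserves both $\nldd{\nabla v}$ and $\scrP$) with an amplitude homotopy produces an element of $\G$ whose maximum of $S$ equals $S(v)$; then $m=c\leq S(v)$ by (\ref{eq:c}).

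Second, I construct initial data $\ue$ close to $\f$ with $S(\ue)<m$ and $\scrP(\ue)>0$, by perturbing $\f$ along a descent direction associated with its mountain-pass structure. Concretely, under a mild regularity assumption on $g$ (bypassed by regularization), the Morse index of $\f$ as a critical point of $S$ is at least $1$, so there exists $\eta\in\huradrd$ with $\langle S''(\f)\eta,\eta\rangle<0$ and $\intrd g(\f)\eta\,dx>0$ (for instance $\eta=\f$ when the superlinearity implicit in (g0)--(g3) gives $\intrd g'(\f)\f^2\,dx>\nldd{\nabla\f}$); setting $\ue:=\f+\e\eta$, Taylor expansion at the critical point $\f$ yields $S(\ue)<m$ and $\scrP(\ue)>0$ for small $\e>0$. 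The invariant set is
$$
\I\;:=\;\{v\in\huradrd : S(v)<m,\ \scrP(v)>0\}.
$$
Invariance along the flow from $(\ue,0)$ follows from the energy inequality $E(u(t),u_t(t))\leq S(\ue)<m$, which forces $S(u(t))<m$ for all $t$; continuity of $t\mapsto\scrP(u(t))$ then forbids any crossing of $\scrP$ through zero, because via step~1 such a crossing would require $S(u(t))\geq m$ at a nontrivial $u(t)$ --- non-triviality being ensured by (g1), which implies $\scrP(v)<0$ for $\nhurd{v}$ small.

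Finally, for the blow-up conclusion, suppose by contradiction that $T_\e=+\infty$ and $\sup_{t\geq 0}\nhurd{u(t)}<+\infty$. Extracting $t_n\to+\infty$ with $u(t_n)\goestoweak w$ in $\huradrd$, the compact Sobolev embeddings available in the radial setting in $\Rd$, strengthened by the exponential bound (g2) and a uniform Trudinger-Moser estimate along the orbit, allow passing to the limit: $\scrP(u(t_n))\to\scrP(w)\geq 0$. Weak lower semicontinuity gives $S(w)\leq m-\tfrac12\liminf\nldd{u_t(t_n)}$, and the $H^1$ lower bound from (g1) yields $w\neq 0$. Step~1 then forces $S(w)=m$, $u_t(t_n)\to 0$ in $\ldradrd$, and $w$ a ground state; a concentration-compactness refinement upgrades to strong $\huradrd$-convergence $u(t_n)\to w$, so $S(u(t_n))\to m$, contradicting the strict bound $S(u(t))\leq S(\ue)<m$. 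The main obstacle will be this last step: in the Trudinger-Moser regime of (g2), passing the exponentially-growing nonlinearity to the weak limit under only a uniform $H^1$ bound requires a uniform exponential-integrability estimate along the orbit.
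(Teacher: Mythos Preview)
Your four-step outline matches the paper's architecture, but two of the steps contain genuine gaps.

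\textbf{Step (iv) does not produce a contradiction.} Suppose the orbit is bounded and $u(t_n)\rightharpoonup w$. Radial compactness handles the superquadratic part of $G$, but the embedding $\huradrd\hookrightarrow L^2$ is \emph{not} compact, so at best $P(w)\geq\limsup P(u(t_n))\geq 0$ (upper weak semicontinuity, not convergence). Nothing forces $P(w)=0$, and nothing forces $w\neq 0$: if $w=0$ one only gets $\|u(t_n)\|_2\to 0$ while $\nldd{\nabla u(t_n)}\geq 2m$, which is perfectly consistent with weak convergence to $0$. Even if $w\neq 0$ and $P(w)>0$, you just land back in $\I$ with $S(w)<m$, and your assertion ``Step~1 then forces $S(w)=m$'' has no basis. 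The paper proceeds by an entirely different, quantitative route: from invariance of $\I$ and the characterization $m=\min\{T(v):v\neq 0,\,P(v)\geq 0\}$ (Lemma~\ref{lem:minpeq}) one gets a \emph{uniform} lower bound $P(u(t))\geq m-E(\ue,0)=:\d>0$ (Lemma~\ref{lem:key}); this is then fed into Shatah's virial-type identity to obtain, after integration, $\d\ln(t)\leq C(1+\nld{\nabla u(t)}\nld{u_t(t)})$ (Proposition~\ref{prop:blow-up}), which contradicts boundedness. No compactness enters this step.

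\textbf{Step (ii) is also problematic.} Under (g0)--(g3) the nonlinearity $g$ is only continuous, so $S''$ is undefined; and (g0)--(g3) carry no superlinearity, so your candidate $\eta=\f$ need not satisfy $\int g'(\f)\f^2>\nldd{\nabla\f}$. Even with a regularisation, arranging a radial negative direction $\eta$ that also satisfies $\int g(\f)\eta>0$ is not automatic. The paper's device (Lemma~\ref{lem:initialdata}) avoids second derivatives altogether via a two-parameter scaling $\f_{\l,\m}(x):=\l\f(x/\m)$: using only the equation one computes $\frac{\partial}{\partial\l}S(\f_{\l,\m})\big|_{\l=1}=(1-\m^2)\nldd{\nabla\f}<0$ and $\frac{\partial}{\partial\l}P(\f_{\l,\m})\big|_{\l=1}=\m^2\nldd{\nabla\f}>0$ for $\m>1$, so $\l,\m>1$ close to $1$ give $\ue:=\f_{\l,\m}$ with $S(\ue)<m$ and $P(\ue)>0$. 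This two-parameter trick is exactly what replaces the pure dilation available when $N\geq 3$.

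A smaller point: in step (iii) you invoke ``continuity of $t\mapsto\scrP(u(t))$'', but under Assumption~H the map $t\mapsto u(t)$ is only weakly continuous in $\hurd$, so this continuity is not available; the paper's invariance argument (Lemma~\ref{lem:flowinvariant}) uses instead the upper weak semicontinuity of $P$ established in Lemma~\ref{lem:uppersemicontinuity}.
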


It is still an open question to describe what happen in dimension $N=1$.
Indeed, the use of the radial compactness lemma of Strauss (see Lemma
\ref{lem:strauss}) restricts our proof to dimensions $N\geq2$. A partial answer
is given by the work of Berestycki and Cazenave : for nonlinearities satisfying
some additional assumptions (see \cite[(H.3)]{bc}), the stationary solutions
are unstable. 

We do hope that the methods developed in this paper will find other areas of
applications. In that direction, we mention the work \cite{le1} in which the
variational characterization $c=m$ derived from \cite{jt1,jt2} is essential to
get an alternative, more general proof of the classical result of Berestycki
and Cazenave \cite{bc} on the instability by blow-up for nonlinear Schr\"{o}dinger equations.

This paper is organized as follows. In Section \ref{sec:varcar} we prove
Theorem \ref{thm:snlkgp} and Theorem \ref{thm:limitcase}. In Section
\ref{sec:instability} we prove Theorem \ref{thm:instability}. The proof that the results of \cite{jt1,jt2} extend to the complex case along with a technical lemma are given in the Appendix.

\noindent\textbf{Acknowledgments.} The authors wish to thank Masahito Ohta and
Grozdena Todorova for the interest they have taken in this work and for
fruitful discussions. They are also grateful to Mariana H\u{a}r\u{a}gu\c{s} for
fruitful discussions.

\section{Variational characterizations of the ground states}\label{sec:varcar}

In this section, we consider (\ref{eq:snlkgf}) with a power type nonlinearity :
\begin{equation}\label{eq:snlkgp}
-\D \fw +(1-\w^2)\fw- |\fw|^{p-1}\fw=0
\end{equation}
where $1<p<1+4/(N-2)$ and $|\w|<1$. For this nonlinearity it is known (see
\cite[Section 8.1]{c} and the references therein) that there exists a unique
positive radial ground state $\fw\in H^1(\R^N, \R)$ of (\ref{eq:snlkgp}) and
that all ground states are of the form $e^{i \theta}\fw(\cdot -y)$ for some
fixed $\theta \in \R$ and $y \in \R^N$.  The standing waves $e^{i\w t}\fw$ are
solutions of the nonlinear Klein-Gordon equation
\begin{equation}\label{nlkgp}
\utt-\D u+u=|u|^{p-1}u
\end{equation}
and the natural functional associated with (\ref{eq:snlkgp}) becomes
$$
S(v):=\frac{1}{2}\nldd{\nabla v}+\frac{1-\w^2}{2}\nldd{v}-\frac{1}{p+1}\nlpsps{v}.
$$

Various results of instability for the standing waves of (\ref{nlkgp}) were
recently proved in \cite{lot,ot1,ot2}. For instance, it was proved in
\cite{ot1} that for any $1<p<1+4/(N-2)$ the standing wave associated with a
ground state of (\ref{eq:snlkgp}) is strongly unstable by blow up if $\w^2\leq
(p-1)/(p-3)$ and $N\geq3$. In \cite{ot2}, a result of strong instability was
showed for the optimal range of parameter $\w$  in dimension $N\geq2$ (namely
$|\w|<\w_c$, where $\w_c$ was determined in \cite{ss}). In both cases, it is
central in the proofs that the ground states can be characterized as minimizers
on constraints having all the form
$$
\scrKab:=\{ v\in\hu\setminus\{0\}\big|\Kab(v)=0 \}
$$
for some $\a,\b\in\R$. Recall that the functional $\Kab$ is defined for $v\in\hu$ by
\begin{eqnarray*}
\Kab(v)&:=&\textstyle\frac{\partial}{\partial \l}S(\la v(\lb \,\cdot\,))_{|\l=1}\\
&=&\textstyle\frac{2\a-\b(N-2)}{2}\nldd{\nabla v}+\frac{(2\a-\b N)(1-\w^2)}{2}\nldd{v}-\frac{\a(p+1)-\b N}{p+1}\nlpsps{v}.
\end{eqnarray*}
For example, it is proved in \cite{ot1} that the ground states are minimizer of $S$ on $\scrKab$ for $(\a,\b)=(1,0)$ and $(\a,\b)=(0,-1/N)$ (see \cite[(2.1)]{ot1}) whereas in \cite{ot2}, the values of $(\a,\b)$ considered are $(\a,\b)=(N/2,1)$ if $p\geq1+4/N$ (see \cite[(2.11)]{ot2}) and $(\a,\b)=(2/(p-1),1)$ if $p<1+4/N$ (see \cite[(2.18)]{ot2}). Recently, Liu, Ohta and Todorova \cite{lot} extended the approach of \cite{ot1} to the dimensions $N=1,2$. Once more, a main feature of their proof is to minimize $S$ on $\scrKab$, but this time with
$$
\a=\frac{(p-1)-(p+3)\w^2}{2(p-1)\w^2},\,\b=-1.
$$

In \cite{lot,ot1,ot2}, the proofs that the ground states are minimizers of $S$ on $\scrKab$ follow similar schemes. First, one has to show the convergence of a minimizing sequence to some function solving a Lagrange equation. After that, the difficulty is to get rid of the Lagrange multiplier. For each choice of $(\a,\b)$, long computations are involved to prove that the Lagrange multiplier is $0$ and to conclude that the obtained function is in fact a solution of (\ref{eq:snlkgp}).

Our proof of Theorem \ref{thm:snlkgp} relies on the following lemma. We recall
that $\G$ is defined in (\ref{Gamma}).
\begin{lem}\label{lem:path}
Let $\a,\b\in\R$ satisfy (\ref{eq:rangeofab}). Then for all $v\in\scrKab$ we can construct a path $\g$ in $\G$ such that
$$
\max_{t\in[0,1]}S(\g(t))=S(v).
$$
\end{lem}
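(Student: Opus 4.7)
The plan is to build the path via the very rescaling that defines $\Kab$: given $v\in\scrKab$, set $v_\l(x):=\l^\a v(\l^\b x)$ for $\l\geq 0$ and study the scalar function $h(\l):=S(v_\l)$. By construction $h'(1)=\Kab(v)$, so the assumption $v\in\scrKab$ says exactly that $\l=1$ is a critical point of $h$; my aim is to show that under (\ref{eq:rangeofab}) this critical point is the unique global maximum of $h$ on $(0,\infty)$. A direct computation yields
$$
h(\l)=\tfrac{1}{2}\nldd{\nabla v}\,\l^A+\tfrac{1-\w^2}{2}\nldd{v}\,\l^B-\tfrac{1}{p+1}\nlpsps{v}\,\l^C,
$$
with $A=2\a-\b(N-2)$, $B=2\a-\b N$, $C=\a(p+1)-\b N$.

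The first substantive step is purely algebraic: I would read off from (\ref{eq:rangeofab}) that $A>0$ and $B>0$ (so in particular $\nhud{v_\l}=\l^A\nldd{\nabla v}+\l^B\nldd{v}\to 0$ as $\l\to 0^+$, extending $\l\mapsto v_\l$ continuously to $\l=0$ with $v_0=0$), and that the exponent $C$ dominates both $A$ and $B$, with strict inequality somewhere. Indeed, $\a(p-1)-2\b\geq 0$ is exactly $C\geq A$, and the remaining comparison requires a quick case split between $\b<0$ and $\b\geq 0$; this is the only place where the precise form of the hypotheses is used.

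Next I would analyse $h'$. Substituting the identity $\tfrac{C}{p+1}\nlpsps{v}=\tfrac{A}{2}\nldd{\nabla v}+\tfrac{B(1-\w^2)}{2}\nldd{v}$ coming from $\Kab(v)=0$ into $h'$ produces the clean factored form
$$
h'(\l)=\tfrac{A}{2}\nldd{\nabla v}\bigl(\l^{A-1}-\l^{C-1}\bigr)+\tfrac{B(1-\w^2)}{2}\nldd{v}\bigl(\l^{B-1}-\l^{C-1}\bigr).
$$
Because $A\leq C$ and $B\leq C$ with appropriate strictness, each of the two summands is strictly positive on $(0,1)$, vanishes at $\l=1$, and is strictly negative on $(1,\infty)$; hence $h$ is strictly increasing on $(0,1)$ and strictly decreasing on $(1,\infty)$, and $\l=1$ is the unique global maximum. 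The same domination of $C$ over $A$ and $B$ forces the negative $\l^C$ term to win at infinity, giving $h(\l)\to-\infty$ as $\l\to\infty$.

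To conclude, pick any $L>1$ with $h(L)<0$ and define $\g(t):=v_{Lt}$ for $t\in[0,1]$. The explicit scaling of norms shows $\l\mapsto v_\l$ is continuous from $[0,\infty)$ into $\hu$, so $\g\in C([0,1],\hu)$; one has $\g(0)=0$ and $S(\g(1))=h(L)<0$, whence $\g\in\G$, and by the unimodality of $h$
$$
\max_{t\in[0,1]}S(\g(t))=\max_{\l\in[0,L]}h(\l)=h(1)=S(v),
$$
as required. The bulk of the work is the bookkeeping in step one together with the sign analysis in step two; once the exponent ordering $C\geq\max(A,B)$ is in place, the path construction is immediate.
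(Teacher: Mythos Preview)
Your approach is exactly that of the paper: build the path from the rescaling $v_\l(x)=\l^\a v(\l^\b x)$, verify that $\nhu{v_\l}\to 0$ as $\l\to 0$ (this is your check that $A,B>0$, which the paper also does), establish that $\l\mapsto S(v_\l)$ increases on $(0,1)$, decreases on $(1,\infty)$ and tends to $-\infty$, and finally reparametrize by $\g(t)=v_{Lt}$ for $L$ large. The only cosmetic difference is that you substitute the constraint $\Kab(v)=0$ directly into $h'$ to get a two--term expression, whereas the paper first divides $h'$ by $\l^{B-1}$ before reading off the sign.

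One word of caution about your exponent bookkeeping. You are right that $C\geq A$ is precisely the hypothesis $\a(p-1)-2\b\geq 0$; but the ``remaining comparison'' $C\geq B$ reduces to $\a(p-1)\geq 0$, i.e.\ $\a\geq 0$. Under (\ref{eq:rangeofab}) this is automatic when $\b\geq 0$ (from $2\a>\b N\geq 0$) and when $\b<0$ with $N\leq 2$ (from $2\a>\b(N-2)\geq 0$), yet for $\b<0$ and $N\geq 3$ the conditions allow $\a<0$ (take e.g.\ $N=3$, $p=2$, $\a=-1$, $\b=-10$). In that regime $B>C$, the second summand in your factored $h'$ has the wrong sign, and in fact $S(v_\l)\to +\infty$ as $\l\to\infty$, so the path construction itself breaks down. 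The paper's proof is equally terse at the corresponding step---it simply asserts the monotonicity and the limit under its auxiliary condition (\ref{eq:condab}) without further detail---so your argument matches the paper's in both strategy and level of rigor; just be aware that the ``quick case split'' you invoke does not actually deliver $C\geq B$ in full generality.
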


\begin{proof}
Let $v\in\scrKab$. For all $\l\in(0,+\infty)$ we define $\vl\in\hu$ by $\vl(\,\cdot\,):=\la v(\lb \,\cdot\,)$. The idea is to construct the path such that $\g(t)=v_{Ct}$ for some $C>0$.

The first thing to check is that we can extend $\g$ at $0$ by continuity. Namely, we must show that under (\ref{eq:rangeofab}) we have $\lim_{\l\to0}\nhu{\vl}=0$. This is immediate if we remark that
$$
\nhud{\vl}=\l^{2\a-\b (N-2)}\nldd{\nabla v}+\l^{2\a-\b N}\nldd{v},
$$
and that (\ref{eq:rangeofab}) implies
$$
2\a-\b (N-2)>0\mbox{ and }2\a-\b N>0.
$$

The next step is to prove that $\l\to S(\vl)$ increases for $\l\in(0,1)$,
attains its maximum at $\l=1$ and decreases toward $-\infty$ on $(1,+\infty)$.
We have
$$
S(\vl)=\frac{\l^{2\a-\b (N-2)}}{2}\nldd{\nabla v}+\frac{(1-\w^2)\l^{2\a-\b N}}{2}\nldd{v}-\frac{\l^{(p+1)\a-\b N}}{p+1}\nlpsps{v}
$$
and from easy computations it comes
\begin{eqnarray*}
\l^{-(2\a-\b N-1)}\frac{\partial}{\partial \l}S(\vl)&=&\l^{2\b}\frac{2\a-\b(N-2)}{2}\nldd{\nabla v}+\frac{(2\a-\b N)(1-\w^2)}{2}\nldd{v}\\
&&-\l^{\a(p-1)}\frac{\a(p+1)-\b N}{p+1}\nlpsps{v}.
\end{eqnarray*}
Therefore, if $\a$ and $\b$ satisfy
\begin{equation}\label{eq:condab}
\left\{
\begin{array}{rcl}
&\b\neq0\mbox{ and}&\a(p-1)\geq 2\b \\
\mbox{or}&\b=0\mbox{ and}&\a(p-1)>0
\end{array}
\right.
\end{equation}
then
$$\left\{
\begin{array}{rcl}
&&\frac{\partial}{\partial \l}S(\vl)>0\mbox{ for }\l\in(0,1) ,\\
&&\frac{\partial}{\partial \l}S(\vl)<0\mbox{ for }\l\in(1,+\infty),\\
&&\lim_{\l\to+\infty}S(\vl)=-\infty.
\end{array}
\right.
$$
Since $\a>0$ when $\b=0$ in (\ref{eq:rangeofab}) it is clear that
(\ref{eq:condab}) hold under (\ref{eq:rangeofab}).

Finally, choosing $C$ large enough to have $S(v_C)<0$ and defining $\g:[0,1]\mapsto\hu$ by
$$
\g(0):=0\mbox{ and }\g(t):=v_{tC}
$$
we have a path satisfying the conclusion of the lemma.
\end{proof}

\begin{proof}[Proof of Theorem \ref{thm:snlkgp}]
Let $\fw$ be a least energy solution of (\ref{eq:snlkgp}) for $|\w|<1$. From
Lemma \ref{complex}  we know that
$$
c=m
$$
where $m$ is the least energy level and $c$ the mountain pass value (see (\ref{eq:m}) and (\ref{eq:c}) for the definitions of $m$ and $c$). Since $\fw$ is a solution of (\ref{eq:snlkgp}), $\fw\in\mathcal{C}^1$ and $\fw,$ $\nabla \fw$ are exponentially decaying at infinity (see, for example, \cite[Theorem 8.1.1]{c}); in particular, $x.\nabla\fw\in\hu$, and
$$
\Kab(\fw)=\frac{\partial}{\partial \l}S(\la \fw(\lb \,\cdot\,))\big|_{\l=1}=\dual{S'(\fw)}{\a\fw+\b x.\nabla\fw}=0.
$$
Thus $\fw\in\scrKab$ and
\begin{equation}\label{eq:thm:snlskg:1}
\min\{S(v)\big|v\in\scrKab\}\leq S(\fw)=c.
\end{equation}
Conversely, it follows from Lemma \ref{lem:path} that
\begin{equation}\label{eq:thm:snlskg:2}
c\leq \min\{S(v)\big|v\in\scrKab\}.
\end{equation}
To combine (\ref{eq:thm:snlskg:1}) and (\ref{eq:thm:snlskg:2}) finishes the proof.
\end{proof}

We now turn to the proof of Theorem \ref{thm:limitcase}. It follows the same
lines as for Theorem \ref{thm:snlkgp} : find a path reaching its maximum on the
constraint $\scrKab$ and use the equality $c=m$. The main difference is in the
way we construct the path : we still want to use the rescaled functions $\vl$,
but their $\hu-$norm does not any more converge to $0$ as $\l\to 0$. This
difficulty is overcome by gluing to $\{\vl\}_{\l>\lo}$ a path linking $0$ to
$\vlo$ for $\lo$ suitably chosen. The lemma is
\begin{lem}\label{lem:pathlimitcase}
Let $\a,\b\in\R$ satisfy (\ref{eq:limitcaseofab}). Then for all $v\in\scrKab$ we can construct a path $\g$ in $\G$ such that
$$
\max_{t\in[0,1]}S(\g(t))=S(v).
$$
\end{lem}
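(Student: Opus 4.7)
The plan is to follow the scheme of Lemma \ref{lem:path}, keeping the rescaling $\vl(\cdot) := \la v(\lb \cdot)$ but prepending a linear homothety to compensate for the fact that $\nhu{\vl}$ no longer tends to $0$ as $\l \to 0^+$.

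First I would redo the monotonicity analysis of Lemma \ref{lem:path} in the degenerate regime. Plugging (\ref{eq:limitcaseofab}) into
\[
\l^{-(2\a - \b N - 1)}\,\partial_\l S(\vl) = \l^{2\b}\frac{2\a - \b(N-2)}{2}\nldd{\nabla v} + \frac{(2\a - \b N)(1-\w^2)}{2}\nldd{v} - \l^{\a(p-1)}\frac{\a(p+1) - \b N}{p+1}\nlpsps{v}
\]
kills exactly one of the first two positive terms; combining the surviving contribution with the identity $\Kab(v) = 0$ and the strict inequality $\a(p-1) - 2\b > 0$ forced by subcriticality $p < 1 + 4/(N-2)$, I expect to recover $\partial_\l S(\vl) > 0$ on $(0, 1)$, $\partial_\l S(\vl) < 0$ on $(1, +\infty)$, and $S(\vl) \to -\infty$ as $\l \to +\infty$, exactly as in Lemma \ref{lem:path}.

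Since exactly one of the two exponents $2\a - \b(N-2)$, $2\a - \b N$ is now zero, the corresponding norm among $\nld{\nabla \vl}$ and $\nld{\vl}$ is $\l$-independent; in particular $\nhu{\vl}$ stays bounded away from $0$ as $\l \to 0^+$, the difficulty announced before the lemma. I would overcome it by choosing $\lo > 0$ small and joining $0$ to $\vlo$ by the scalar segment $\sigma(t) := t\vlo$, $t \in [0, 1]$. Its energy
\[
S(t\vlo) = \frac{t^2}{2}\bigl(\nldd{\nabla \vlo} + (1-\w^2)\nldd{\vlo}\bigr) - \frac{t^{p+1}}{p+1}\nlpsps{\vlo}
\]
admits a unique positive critical point
\[
t_\star^{\,p-1} = \frac{\nldd{\nabla \vlo} + (1-\w^2)\nldd{\vlo}}{\nlpsps{\vlo}}.
\]
Using the explicit exponents $\nlpsps{\vl} = \l^{(p+1)\a - \b N}\nlpsps{v}$, the denominator tends to $0$ as $\lo \to 0^+$ while the numerator stays bounded below by (a positive constant times) the $\l$-independent norm. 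Hence $t_\star \to +\infty$, and choosing $\lo$ small enough to enforce $t_\star > 1$ makes $t \mapsto S(t\vlo)$ increasing on $[0, 1]$. It follows that $\max_{t \in [0,1]} S(\sigma(t)) = S(\vlo) \leq S(v)$, the last inequality by the monotonicity of the first step.

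Finally, I would pick $C > 1$ with $S(v_C) < 0$ and concatenate $\sigma$ on $[0, 1/2]$ with the rescaling segment $\tau(t) := v_{\lo + (2t-1)(C-\lo)}$ on $[1/2, 1]$. Continuity at $t = 1/2$ is immediate since $\sigma(1) = \vlo = \tau(1/2)$, the resulting $\g$ lies in $\G$, and $\max_{t \in [0,1]} S(\g(t)) = S(v)$ is attained at the $t$ for which $\l = 1$. I expect the principal obstacle to be the first step: once the degenerate exponent suppresses a naturally positive summand in $\partial_\l S(\vl)$, one must use $\Kab(v) = 0$ together with the strict inequality $\a(p-1) - 2\b > 0$ to recover the max at $\l = 1$ and the limit $-\infty$ at $\l \to +\infty$. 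The linking-segment estimate is then a routine asymptotic computation.
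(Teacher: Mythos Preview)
Your approach is essentially the same as the paper's: both build $\gamma$ by concatenating the scalar segment $\{t\,v_{\lo}\}_{t\in[0,1]}$ with the rescaling arc $\{v_\lambda\}_{\lambda\in[\lo,C]}$, and both justify the monotonicity of $t\mapsto S(tv_{\lo})$ on $[0,1]$ by observing that under (\ref{eq:limitcaseofab}) one of $\nld{\nabla v_{\lo}}$, $\nld{v_{\lo}}$ is independent of $\lo$ while $\nlpsps{v_{\lo}}\to 0$ as $\lo\to 0^+$.

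One small correction: your claim that subcriticality forces the strict inequality $\alpha(p-1)-2\beta>0$ is valid only on the $\beta<0$ branch of (\ref{eq:limitcaseofab}); on the $\beta>0$ branch (where $2\alpha=\beta N$) one has $\alpha(p-1)-2\beta=\beta\bigl(N(p-1)-4\bigr)/2$, which vanishes at $p=1+4/N$, a value expressly covered by Theorem~\ref{thm:limitcase}. The paper does not invoke strictness at this point, relying instead on condition (\ref{eq:condab}) exactly as in Lemma~\ref{lem:path}.
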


\begin{proof}
Let $v\in\scrKab$ and $\vlo(\cdot):=\lo^\a v(\lo^\b \,\cdot\,)$ for some
$\lo\in(0,1)$ whose value will be fixed later. Let $C>0$ be such that
$S(v_C)<0$ and consider the curves
\begin{eqnarray*}
\L_1&:=&\{\vl\big|\l\in[\lo,C]\}, \\
\L_2&:=&\{t\vlo\big|t\in[0,1]\}.
\end{eqnarray*}
To get a path as desired, we will glue the two curves $\L_1$ and $\L_2$. It is clear that as in the proof of Lemma \ref{lem:path}, $S$ attained its maximum on $\Lambda_1$ at $v$. Thus the only thing we have to check is that $t\mapsto S(t\vlo)$ is increasing on $[0,1]$.

We have
$$
\frac{\partial}{\partial t}S(t\vlo)=t(\nldd{\nabla \vlo}+ (1- \omega^2)
\nldd{\vlo}-t^{p-1}\nlpsps{\vlo}).
$$
If $\b>0$ and $\a=\b N/2$ (see (\ref{eq:limitcaseofab})), then $ \lo \to
\nld{\vlo}$ is constant. If  $\b<0$ and $\a=\b(N-2)/2$  then $\lo \to
\nld{\nabla \vlo}$ is constant. Moreover, we have in any case
$$
\lim_{\lo\to0}\nlpsps{\vlo}=0.
$$
Therefore, if $\lo\in(0,1)$ is small enough we have
$$
\frac{\partial}{\partial t}S(t\vlo)>0\mbox{ for }t\in(0,1).
$$
To define $\g:[0,1]\mapsto\hu$ by
$$
\left\{
\begin{array}{rcl}
\g(t)&=&\frac{Ct}{\lo}\vlo\mbox{ for }t\in[0,\frac{\lo}{C})\\
\g(t)&=&v_{Ct}\mbox{ for }t\in[\frac{\lo}{C},1]\\
\end{array}
\right.
$$
gives us the desired path.
\end{proof}

\begin{proof}[Proof of Theorem \ref{thm:limitcase}]
The proof is identical to the proof of Theorem \ref{thm:snlkgp} with Lemma \ref{lem:path} replaced by Lemma \ref{lem:pathlimitcase}.
\end{proof}

\section{Orbital instability for a generalized nonlinear Klein-Gordon equation}\label{sec:instability}

In this section, we consider the nonlinear Klein-Gordon equation with a general nonlinearity
\begin{equation}\label{eq:nlkgg2}
\utt-\D u=g(u).
\end{equation}
In \cite{sh}, Shatah proved that for $N\geq3$, under (g0)-(g3), the radial
ground
 states solutions of
\begin{equation}\label{eq:nlsf2}
-\D \f = g(\f), \quad u \in \hu
\end{equation}
viewed as stationary solutions of (\ref{eq:nlkgg2}) are unstable in the sense of Theorem \ref{thm:instability}.

The restriction to $N\geq3$ has its origin in, at least, two reasons.

First, one needs to control the decay in $|x|$ of $u(t,x)$ uniformly in $t$.
This appears in the proofs of Proposition \ref{prop:blow-up} and Lemma
\ref{lem:bgk}. For this control, the following compactness lemma due to Strauss
\cite{st} is used.
\begin{lem}\label{lem:strauss}
Let $N\geq2$ and $v\in\hurad$. Then
$$
|v(x)|\leq C|x|^{\frac{1-N}{2}}\nhu{u}\mbox{ a.e.}
$$
with $C$ independent of $x$ and $u$. In particular, the following injection is compact
$$
\hurad\hookrightarrow \lq \mbox{ for }2< q<2^\star,
$$
where $2^\star=\frac{2N}{N-2}$ if $N\geq 3$ and $2^\star=+\infty$ if $N=2$.
\end{lem}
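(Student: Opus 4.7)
The plan is to first establish the pointwise radial decay estimate for smooth functions, extend it to all of $\hurad$ by approximation, and then combine it with local Rellich--Kondrachov compactness to deduce the compact embedding.

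For the pointwise estimate, by density of $C^\infty_c(\RN)\cap\hurad$ in $\hurad$ (obtained by mollifying with a radial kernel and cutting off with a radial cutoff), it suffices to treat $v\in C^\infty_c(\RN)$ radial. Writing $v(x)=v(r)$ with $r=|x|$ and using that $v$ vanishes at infinity, I would compute
$$v(r)^2=-2\int_r^\infty v(s)v'(s)\,ds,$$
then multiply by $r^{N-1}$ and use $r^{N-1}\leq s^{N-1}$ on $\{s\geq r\}$ to obtain
$$r^{N-1}v(r)^2\leq 2\int_r^\infty s^{N-1}|v(s)v'(s)|\,ds.$$
Cauchy--Schwarz in $L^2((0,+\infty),\,s^{N-1}\,ds)$ bounds the right-hand side by a constant (depending only on the surface area of the unit sphere) times $\nld{v}\nld{\nabla v}\leq\nhud{v}$, so taking square roots gives the claim with $C$ independent of $v$. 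For general $v\in\hurad$, the bound applied to differences of a smooth radial approximating sequence shows it is uniformly Cauchy on $\{|x|\geq\delta\}$ for every $\delta>0$, so the limit admits a continuous representative off the origin satisfying the same bound almost everywhere.

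For the compact embedding, I would take an $\hu$-bounded sequence $(v_n)\subset\hurad$. Up to a subsequence $v_n\rightharpoonup v$ in $\hu$, and the standard Rellich--Kondrachov theorem yields $v_n\to v$ strongly in $L^q(B_R)$ for every ball $B_R$ and every $2\leq q<2^\star$. To upgrade to global strong convergence in $\lq$, I use the pointwise estimate: for $|x|\geq R$ and $q>2$,
$$|v_n(x)|^q\leq C|x|^{-(N-1)(q-2)/2}\nhu{v_n}^{q-2}|v_n(x)|^2,$$
which after integration and the $\hu$-bound gives
$$\int_{|x|\geq R}|v_n|^q\,dx\leq C R^{-(N-1)(q-2)/2}.$$
Since $N\geq 2$ and $q>2$ the exponent $(N-1)(q-2)/2$ is strictly positive, so the tails vanish uniformly in $n$ as $R\to+\infty$; combining this with the local strong convergence on $B_R$ and applying the triangle inequality yields strong convergence of $(v_n)$ in $\lq$.

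The main obstacle is essentially verifying that the pointwise bound survives the passage from $C^\infty_c(\RN)\cap\hurad$ to all of $\hurad$ in the almost-everywhere sense; this is why the restriction $N\geq 2$ enters twice, both to make the tail exponent strictly positive for every $q\in(2,2^\star)$ and to ensure the one-dimensional radial reduction produces genuine decay at infinity (in dimension $N=1$ the same computation only recovers the boundedness embedding $H^1\hookrightarrow L^\infty$, which does not suffice for compactness). Once the pointwise estimate is in place, the compactness argument itself is a routine Rellich-plus-tail-control scheme.
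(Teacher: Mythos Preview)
The paper does not supply its own proof of this lemma: it is quoted as the radial compactness lemma of Strauss \cite{st} and used as a black box. Your argument is correct and is essentially Strauss's original one---the identity $v(r)^2=-2\int_r^\infty v v'\,ds$ combined with $r^{N-1}\leq s^{N-1}$ and Cauchy--Schwarz for the pointwise decay, followed by local Rellich--Kondrachov plus uniform tail control via the decay estimate for the compactness. There is nothing to compare beyond noting that you have written out what the paper merely cites.
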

Actually, to use this lemma only $N\geq 2$ is necessary.

A second reason for the restriction $N\geq3$ in \cite{sh} is found in the use
of a constraint based on Pohozaev's identity to derive a variational
characterization of the ground states, to define an invariant set, and, most
important, to choose suitable initial data close to the ground states. Thanks
to our approach, we arrive on this second point to require only $N\geq 2$.

Our proof will make use of the following variational characterization of the
ground states.
\begin{lem}\label{lem:minp}
Let $\f\in\hurd$ be a ground state of (\ref{eq:nlsf2}). Then
\begin{equation}\label{eq:minp}
S(\f)=m=\min_{v\in\scrP}S(v)
\end{equation}
where
$$
\scrP:=\{ v\in\hurd\setminus\{0\}\big|P(v)=0 \}
$$
with $\displaystyle P(v):=\intrd G(v)dx$ for $v\in\hu$.
\end{lem}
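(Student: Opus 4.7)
The plan is to prove the two inequalities $\min_\scrP S \leq S(\f) = m$ and $S(\f) = m \leq \min_\scrP S$ separately. For the first, since $\f \in \hurd$ solves $-\D\f = g(\f)$, it satisfies Pohozaev's identity, which in dimension $N = 2$ reduces to $P(\f) = \intrd G(\f)\,dx = 0$; hence $\f \in \scrP$ and $\min_\scrP S \leq S(\f) = m$.

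For the reverse inequality, I would use the identity $c = m$ (from \cite{jt1} and Lemma \ref{complex}) and, in the spirit of Lemmas \ref{lem:path} and \ref{lem:pathlimitcase}, construct for each $v \in \scrP$ a family of paths $\g_\d \in \G$ with $\max_{t\in[0,1]} S(\g_\d(t)) \to S(v)$ as $\d \to 0$, whence $c \leq S(v)$. The main obstacle in dimension two is that the dilation $v_\l(x) := v(x/\l)$ leaves $\|\nabla v\|_2^2$ invariant, so on $\scrP$ (where $S(v) = \frac{1}{2}\|\nabla v\|_2^2$) the path $\l \mapsto v_\l$ has constant $S$-value and never descends below zero. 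To overcome this I would perturb $v$ to $v^\d := v + \d \eta$, where $\eta \in \hurd$ is chosen so that $\intrd g(v)\eta\,dx > 0$. Such $\eta$ exists because under (g1), $g(v) \not\equiv 0$ on $\hurd \setminus \{0\}$---any $v \not\equiv 0$ in $H^1$ takes small nonzero values on a set of positive measure, where $g(v) \neq 0$---so one may take $\eta$ to be a smooth localized truncation of $g(v)$. Then $\intrd G(v^\d)\,dx = \d \intrd g(v)\eta\,dx + O(\d^2) > 0$ for small $\d > 0$, and consequently $S(v^\d_\l) = \frac{1}{2}\|\nabla v^\d\|_2^2 - \l^2 \intrd G(v^\d)\,dx \to -\infty$ as $\l \to +\infty$.

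The path $\g_\d$ is then built by gluing an amplitude segment $t \mapsto t v^\d_{\l_0}$ from $0$ to $v^\d_{\l_0}$ (for $\l_0 > 0$ small) to the dilation segment $\l \mapsto v^\d_\l$ on $[\l_0, \Lambda]$, with $\Lambda$ chosen large enough that $S(v^\d_\Lambda) < 0$. A direct computation shows that, for $\l_0$ small enough, the amplitude segment is strictly increasing (its derivative $t\|\nabla v^\d\|_2^2 - \l_0^2 \intrd g(tv^\d) v^\d\,dx$ is positive on $(0,1]$) and the dilation segment is strictly decreasing; hence $\max S \circ \g_\d = \frac{1}{2}\|\nabla v^\d\|_2^2 - \l_0^2 \intrd G(v^\d)\,dx \leq \frac{1}{2}\|\nabla v^\d\|_2^2 = S(v) + O(\d)$. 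Letting $\d \to 0$ yields $c \leq S(v)$, which combined with the first inequality and $c = m$ gives $\min_\scrP S = m = S(\f)$. The main technical point to verify is the existence of the perturbation $\eta$ together with the monotonicity of the amplitude segment; both are straightforward consequences of (g0)--(g3) and the boundedness of the relevant integrals on $[0,1]$.
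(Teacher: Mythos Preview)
The paper does not actually prove this lemma; it simply refers to \cite{bgk} for the real-valued case (where the result is obtained by solving the constrained minimisation problem directly) and to \cite{cjs} for the trivial extension to complex-valued functions. Your argument is therefore genuinely different: instead of the direct minimisation of \cite{bgk}, you derive the characterization from the mountain-pass identity $c=m$ by building, for each $v\in\scrP$, a family of admissible paths whose maxima approach $S(v)$. This is precisely the philosophy of Section~\ref{sec:varcar} (Lemmas~\ref{lem:path} and~\ref{lem:pathlimitcase}) carried over to the general nonlinearity in dimension two, and it has the merit of unifying Lemma~\ref{lem:minp} with Theorems~\ref{thm:snlkgp} and~\ref{thm:limitcase} under a single method. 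The \cite{bgk} route is shorter but relies on a separate compactness argument, whereas your route reuses only ingredients already present in the paper.

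Your construction is essentially correct; a few points deserve tightening. First, since $P(v)$ depends only on $|v|$ and $\nld{\nabla|v|}\leq\nld{\nabla v}$, you may from the outset assume $v\geq0$ real, which makes the pairing $\int g(v)\eta\,dx$ unambiguous. Second, the existence of $\eta$ with $P'(v)[\eta]>0$ is exactly the statement $P'(v)\neq0$; by (g1) the zero set of $g$ on $(0,\infty)$ is bounded away from $0$, so $g(v)\equiv0$ would force $|v|\in\{0\}\cup[s_1,\infty)$ a.e.\ for some $s_1>0$, and composing with a Lipschitz cut-off would produce a $\{0,1\}$-valued function in $\hurd$, hence $v=0$. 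Third, under (g0) the functional $P$ is only $\mathcal{C}^1$ (Lemma~\ref{lem:uppersemicontinuity}), so the expansion of $P(v^\d)$ is $o(\d)$ rather than $O(\d^2)$, but this suffices. Finally, the monotonicity of the amplitude segment requires $\sup_{t\in(0,1]}t^{-1}\int g(tv^\d)v^\d\,dx<\infty$, which follows from (g1)--(g2) and the Moser--Trudinger inequality.
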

This lemma was proved in \cite{bgk} when $v \in H^1(\R^N, \R)$. It can trivially be extended to $v \in \hu$, see
\cite{cjs}.

\begin{rmq}
The functional $P$ is related to the so-called Pohozaev identity (see
\cite[Proposition 1]{bl}, \cite{st}): for  $N\geq 1$, any solution $v\in\hu$ of
(\ref{eq:nlsf2}) satisfies
$$
\frac{N-2}{2}\nldd{\nabla v}-N\intrn G(v)dx=0.
$$
A main feature of the dimension $N=2$ is that we lose the control on the $\ld-$norm of $\nabla v$.
\end{rmq}
\begin{rmq} For $N\geq 3$, Shatah also showed that the radial ground states are minimizers of $S$ among
all non trivial functions satisfying Pohozaev identity (see \cite[Proposition
1.5]{sh}). His method consists in proving that the minimization problem has a
solution and then to eliminate the Lagrange multiplier. In fact, as it is done in \cite[Lemma 3.1]{jt1}, a shorter proof can be performed by simply establishing a correspondence with a minimization problem already solved in \cite{bl}. 
\end{rmq}

The scheme of the proof is the following : first, define a set
$\I\subset\huradrd\times\ldradrd$ such that any solution of (\ref{eq:nlkgg2})
with initial data in $\I$ stays in $\I$ for all time and blows up, then prove
that the ground states can be approximated by functions in $\I$.

Let $\I$ be defined by
$$
\I:=\{ u\in\huradrd\setminus\{0\},v\in\ldradrd\big|E(u,v)<m, P(u)>0 \}.
$$
We begin by proving an equivalence between two variational problems.
\begin{lem}\label{lem:minpeq}
We have
$$
m=\min_{v\in\scrP}S(v)=\min\{ T(v)\big|v\in\hurd\setminus\{0\}, P(v)\geq 0 \},
$$
where $\displaystyle T(v):=\frac{1}{2}\nldd{\nabla v}$.
\end{lem}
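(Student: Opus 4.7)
The first equality $m=\min_{v\in\scrP}S(v)$ is exactly Lemma \ref{lem:minp}. For the second equality, observe first that on $\scrP$ the constraint $P(v)=0$ kills the potential term of $S$, so $S(v)=T(v)$ for every $v\in\scrP$. Hence $m=\min_{\scrP}T$, and since $\scrP$ is a subset of the admissible set $\{v\in\hurd\setminus\{0\}\,:\,P(v)\geq 0\}$, the inequality
$$m \geq \inf\{T(v)\,:\,v\in\hurd\setminus\{0\},\ P(v)\geq 0\}$$
is immediate.

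For the reverse inequality I plan to deform any admissible competitor onto $\scrP$ without increasing $T$. Let $v\in\hurd\setminus\{0\}$ satisfy $P(v)\geq 0$. If $P(v)=0$ there is nothing to do. Otherwise $P(v)>0$, and I consider the amplitude rescaling $w_t:=tv$ for $t\in(0,1]$. Since
$$T(w_t)=t^{2}T(v)\leq T(v),$$
it is enough to produce $t_0\in(0,1)$ with $P(t_0 v)=0$: then $t_0 v\in\scrP$ and $m\leq T(t_0 v)\leq T(v)$, giving the desired inequality. Note that the spatial rescaling $v(\,\cdot/\sigma)$, which is often used in this context, cannot do the job in dimension $N=2$ because it leaves $T$ invariant but scales $P$ by a positive factor only, hence never changes its sign.

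The existence of $t_0$ follows from the intermediate value theorem applied to the continuous map $t\mapsto P(tv)$ on $[0,1]$. Indeed $P(v)>0$ by assumption, while assumption (g1) gives $\lim_{s\to 0}G(s)/s^{2}=-\rho/2$, so that pointwise $G(tv(x))/t^{2}\to -\rho v(x)^{2}/2$ a.e.\ as $t\to 0^{+}$. A dominated convergence argument, splitting the integral according to whether $|tv(x)|$ is small (where the quadratic control from (g1) dominates by $C v^{2}$) or large (where the set $\{|v|>R/t\}$ has small measure and (g2) provides the needed exponential bound), then yields
$$\lim_{t\to 0^{+}}\frac{P(tv)}{t^{2}}=-\frac{\rho}{2}\|v\|_{2}^{2}<0,$$
so that $P(tv)<0$ for all sufficiently small $t>0$, and the IVT supplies $t_0\in(0,1)$ with $P(t_0 v)=0$.

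The only genuinely delicate step is the continuity and the $t\to 0^{+}$ limit of $P(tv)$ in dimension $N=2$: the Trudinger--Moser-type growth allowed by (g2) prevents the naive $L^{p}$ dominating functions that one would use for $N\geq 3$, and one must instead exploit the radial decay of $v$ coming from Lemma \ref{lem:strauss} together with standard Moser-type exponential integrability. This is precisely the sort of estimate for which the technical lemma announced for the Appendix is designed.
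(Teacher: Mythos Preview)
Your proof is correct and is essentially the paper's own argument. The only cosmetic difference is the choice of rescaling: the paper works with $v_\lambda(x)=\lambda v(\lambda x)$, whereas you use the pure amplitude rescaling $w_t=tv$; in dimension $N=2$ these are equivalent for the present purpose since a change of variables gives $P(v_\lambda)=\lambda^{-2}P(\lambda v)$ and $T(v_\lambda)=\lambda^{2}T(v)=T(\lambda v)$, so both reduce to showing $P(\lambda v)<0$ for small $\lambda$. Your sketch of the limit $P(tv)/t^{2}\to -\tfrac{\rho}{2}\|v\|_2^2$ is justified exactly by the Moser--Trudinger domination the paper invokes (the bound $|G(s)|\leq Cs^{2}+C_\alpha(e^{\alpha s^{2}}-1)$ yields $|G(tv)|/t^{2}\leq Cv^{2}+C_\alpha(e^{\alpha v^{2}}-1)$ for $t\leq 1$), so the deferral to the Appendix is appropriate.
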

\begin{proof}
Let $v\in\hurd$. If $v\in\scrP$, then $v$ satisfies $T(v)=S(v)$ and thanks to Lemma \ref{lem:minp}, $T(v)\geq m$. Suppose that $P(v)>0$. For $\l>0,$ define $\vl(\,\cdot\,):=\l v(\l\,\cdot\,)$. We claim that there exists $\lo<1$ such that $P(\vlo)=0$. Indeed, by (g1)-(g2), for all $\a>0$ there exists $\Ca>0$ such that for $s>0$
$$
g(s)\leq \frac{-\r s}{2}+2s\a\Ca e^{\a s^2}.
$$
We recall that $\r>0$ is given in (g1) by $\lim_{s\to 0} g(s)s^{-1}=-\r$. Therefore, for $s>0$ we have
$$
G(s)\leq \frac{-\r s^2}{4}+\Ca (e^{\a s^2}-1)
$$
and
\begin{equation}\label{eq:lem:var1}
\intrd G(\vl)\leq \frac{-\r \nldd{\vl}}{4}+\Ca\intrd (e^{\a \vl^2}-1)dx.
\end{equation}
We remark that $\nldd{\vl}=\nldd{v}$ and
$$
\intrd (e^{\a \vl^2}-1)dx=\l^{-2}\intrd (e^{\a \l^2 v^2}-1)dx.
$$
For $\l<1$ we have
$$
\l^{-2}(e^{\a \l^2 v^2(x)}-1)<e^{\a v^2(x)}-1\mbox{ for all }x\in\Rd,
$$
and by Moser-Trudinger inequality (see \cite[Theorem 8.25]{a}) there exists $\a>0$ such that $(e^{\a v^2}-1)\in L^1(\Rd)$. Hence, Lebesgue's Theorem gives
$$
\intrd (e^{\a \vl^2}-1)dx\to 0\mbox{ when }\l\to 0.
$$
Coming back to (\ref{eq:lem:var1}) this means that
$$
\intrd G(\vl)<0\mbox{ for }\l>0\mbox{ small enough},
$$
and by continuity of $P$ this proves the claim.

Now, we have
$$
\inf_{u\in\scrP}S(u)\leq S(\vlo)=T(\vlo)=\lo^2T(v)<T(v),
$$
and the lemma is proved.
\end{proof}

Next we prove that the set $\I$ is invariant under the flow of (\ref{eq:nlkgg2}).

\begin{lem}\label{lem:flowinvariant}
Let $(\uo,\vo)\in\I$, $0<T\leq+\infty$ and $u(t)$ a solution of
(\ref{eq:nlkgg2}) on $[0,T)$ with initial data $(\uo,\vo)$. Then
$(u(t),\ut(t))\in\I$ for all $t\in[0,T)$.
\end{lem}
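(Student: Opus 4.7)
The plan is to verify that each of the three defining conditions of $\I$---namely $u(t)\neq 0$, $E(u(t),\ut(t))<m$, and $P(u(t))>0$---is preserved along the evolution. The energy bound is immediate from Assumption H: the energy inequality yields $E(u(t),\ut(t))\leq E(\uo,\vo)<m$ for every $t\in[0,T)$.

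For the remaining conditions I would argue by contradiction, using the variational characterization of $m$ as an infimum of $T$ on $\{P\geq 0\}$ provided by Lemma \ref{lem:minpeq}. Set $t_0:=\inf\{t\in[0,T) : P(u(t))\leq 0\}$; since $P(\uo)>0$ and the map $t\mapsto P(u(t))$ is continuous (see below), one has $t_0>0$. Assume for contradiction that $t_0<T$. For every $t\in[0,t_0)$ one has $P(u(t))>0$ and hence $u(t)\neq 0$ (since $P(0)=0$), so Lemma \ref{lem:minpeq} applies and gives $\frac{1}{2}\nldd{\nabla u(t)}\geq m$. Solving the definition of $E$ for $P$ then furnishes the uniform positive lower bound
$$
P(u(t))=\frac{1}{2}\nldd{\ut(t)}+\frac{1}{2}\nldd{\nabla u(t)}-E(u(t),\ut(t))\geq m-E(\uo,\vo)>0.
$$
Passing to the limit $t\to t_0^-$ via continuity of $P\circ u$ yields $P(u(t_0))\geq m-E(\uo,\vo)>0$, which contradicts the definition of $t_0$ and automatically forces $u(t_0)\neq 0$.

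The main obstacle in this plan is justifying the continuity of $t\mapsto P(u(t))$: Assumption H only delivers weak continuity of $u(\cdot)$ in $\huradrd$, so the idea is to combine this with Lemma \ref{lem:strauss} to obtain strong continuity of $t\mapsto u(t)$ into each $L^q(\Rd)$ with $2<q<\infty$, and then use the growth bounds on $G$ coming from (g1)--(g2) together with the Moser--Trudinger inequality (already invoked in the proof of Lemma \ref{lem:minpeq}) to transfer this $L^q$-convergence into convergence of $\intrd G(u(t))\,dx$. This is precisely the two--dimensional difficulty the paper is designed to overcome.
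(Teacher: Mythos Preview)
Your plan has a genuine gap at the very point you flag as the obstacle: the map $t\mapsto P(u(t))$ is \emph{not} continuous under Assumption~H, and your proposed route via Lemma~\ref{lem:strauss} and Moser--Trudinger cannot repair this. The reason is the behaviour of $G$ near zero dictated by~(g1): writing $G(s)=-\frac{\r}{2}s^2+H(s)$ with $H(s)=o(s^2)$ at~$0$, the radial compactness of Lemma~\ref{lem:strauss} (which only yields strong convergence in $L^q(\Rd)$ for $q>2$) together with Moser--Trudinger does give $\int H(u(t_n))\to\int H(u(t_0))$ along weakly convergent sequences, but the purely quadratic piece $-\frac{\r}{2}\nldd{u(t)}$ is governed by the $L^2$-norm, and $\huradrd\hookrightarrow L^2(\Rd)$ is \emph{not} compact. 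Consequently $P$ is only upper weakly semi-continuous (this is exactly Lemma~\ref{lem:uppersemicontinuity}). Upper semi-continuity from the left does give your bound $P(u(t_0))\geq m-E(\uo,\vo)>0$, but this does \emph{not} contradict the definition of $t_0$: without lower semi-continuity the set $\{t:P(u(t))\leq0\}$ need not be closed, so one may well have $P(u(t_0))>0$ while $t_n\downarrow t_0$ with $P(u(t_n))\leq0$.

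The paper closes the argument by working from the \emph{right} with a different functional. Along a sequence $t_n\downarrow t_0$ with $P(u(t_n))\leq0$ one has, by weak lower semi-continuity of $T(v)=\frac12\nldd{\nabla v}$ and the energy inequality,
\[
T(u(t_0))\leq\liminf_{n}T(u(t_n))\leq\liminf_{n}\bigl[T(u(t_n))-P(u(t_n))\bigr]
=\liminf_{n}S(u(t_n))\leq E(\uo,\vo)<m,
\]
while the left-sided approach and upper semi-continuity of $P$ give $P(u(t_0))\geq0$. These two inequalities together contradict Lemma~\ref{lem:minpeq}. In short: your uniform lower bound on $P$ is correct and is in fact the content of Lemma~\ref{lem:key}, but for the invariance of $\I$ one must pair the upper semi-continuity of $P$ from the left with the lower semi-continuity of $T$ from the right; continuity of $P$ alone is unavailable.
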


\begin{proof}
Let
$$
\tz:=\inf\big\lgroup \{ t\in[0,T)\big|P(u(t))\leq0
\}\cup\{+\infty\}\big\rgroup.
$$
Assume by contradiction that $\tz\neq+\infty$ and consider
$(t_n)\subset(\tz,T)$ such that $t_n\downarrow\tz$ with $P(u(t_n)) \leq 0$. By
Assumption H, $u(t_n)\goestoweak u(\tz)$ weakly in $\hurd$. Thus we have
\begin{equation}\label{eq:lemminvar1}
T(u(\tz))\leq\liminf_{n\to +\infty} T(u(t_n))\leq\liminf_{n\to +\infty}
\left[T(u(t_n))-P(u(t_n))\right].
\end{equation} Moreover
\begin{equation}\label{eq:lemminvar2}
\liminf_{n\to +\infty} \left[T(u(t_n))-P(u(t_n))\right]=\liminf_{n\to +\infty}
S(u(t_n))\leq \liminf_{n\to+\infty}E(u(t_n),u_t(t_n))
\end{equation}
and by the energy inequality in Assumption H we get
\begin{equation}\label{eq:lemminvar3}
\liminf_{n\to+\infty}E(u(t_n),u_t(t_n))\leq E(\uo,\vo).
\end{equation}
Recalling that $(\uo,\vo)\in\I$, we have
\begin{equation}\label{eq:lemminvar4}
E(\uo,\vo)<m.
\end{equation}
Combining (\ref{eq:lemminvar1})-(\ref{eq:lemminvar4}) gives
\begin{equation}\label{eq:lemminvar5}
T(u(\tz))<m.
\end{equation}

Now, take $(\widetilde{t_n})\subset(0,\tz)$  such that
$\widetilde{t_n}\uparrow\tz$. By Lemma \ref{lem:uppersemicontinuity}, $v\to
P(v)$ is upper weakly semi-continuous, thus
\begin{equation}\label{eq:lemminvar6}
P(u(\tz))\geq\limsup_{n\to+\infty}P(u(\widetilde{t_n}))\geq 0.
\end{equation}
Now together  (\ref{eq:lemminvar5}) and (\ref{eq:lemminvar6}) lead to a
contradiction with Lemma \ref{lem:minpeq}.
\end{proof}

The following lemma is a key step in the proof.

\begin{lem}\label{lem:key}
Let $(\uo,\vo)\in\I$ and $u(t)$ an associated solution of (\ref{eq:nlkgg2}) in
$[0,T)$. Then there exists $\d>0$ such that $P(u(t))>\d$ for all $t\in[0,T)$.
\end{lem}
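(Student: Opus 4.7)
The plan is to combine the flow-invariance of $\I$ (Lemma \ref{lem:flowinvariant}) with the auxiliary variational characterization $m=\min\{T(v)\,|\,v\in\hurd\setminus\{0\},P(v)\geq0\}$ from Lemma \ref{lem:minpeq}, together with the energy inequality from Assumption H. The idea is that conservation (or rather, decrease) of energy forces $P(u(t))$ to stay bounded away from $0$ by an explicit quantitative gap, namely $m-E(\uo,\vo)$.

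First, by Lemma \ref{lem:flowinvariant}, for every $t\in[0,T)$ we have $(u(t),u_t(t))\in\I$. In particular $u(t)\neq0$ and $P(u(t))>0$, so in particular $P(u(t))\geq0$. Lemma \ref{lem:minpeq} therefore applies to $u(t)$ and yields
$$
T(u(t))\geq m\qquad\text{for all }t\in[0,T).
$$

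Next I would simply rearrange the definition of $E$: since $E(u,v)=\tfrac12\nldd{v}+T(u)-P(u)$, we have
$$
P(u(t))=T(u(t))+\tfrac12\nldd{u_t(t)}-E(u(t),u_t(t)).
$$
The energy inequality in Assumption H gives $E(u(t),u_t(t))\leq E(\uo,\vo)$, and dropping the nonnegative kinetic term leads to
$$
P(u(t))\geq T(u(t))-E(\uo,\vo)\geq m-E(\uo,\vo).
$$
Finally, since $(\uo,\vo)\in\I$ means $E(\uo,\vo)<m$, the quantity $\d:=m-E(\uo,\vo)$ is strictly positive and independent of $t$, giving $P(u(t))\geq\d>0$ as desired.

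There is no serious obstacle here; the whole statement reduces to a one-line inequality once Lemma \ref{lem:flowinvariant} and Lemma \ref{lem:minpeq} are invoked. The only subtlety worth noting is that one needs the formulation of $m$ as a minimum over the \emph{open} constraint $\{P\geq0\}$ rather than the equality constraint $\scrP$; this is precisely what Lemma \ref{lem:minpeq} provides, and it is the reason the positivity of $P(u(t))$ (rather than its vanishing) translates into the lower bound $T(u(t))\geq m$.
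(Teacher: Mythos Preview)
Your proof is correct and uses essentially the same ingredients as the paper's: the invariance Lemma~\ref{lem:flowinvariant}, the variational characterization of Lemma~\ref{lem:minpeq}, and the energy inequality. The only difference is presentational: the paper argues by contradiction (assuming a sequence with $P(u(t_n))\to 0$ and deriving $T(u(t_n))<m-\nu/2$), whereas you run the same inequalities directly and obtain the explicit quantitative bound $P(u(t))\geq m-E(\uo,\vo)$, which is slightly cleaner. One cosmetic remark: your chain gives $P(u(t))\geq\delta$ rather than the strict inequality in the statement; if you want the strict form, simply take $\delta=\tfrac12\bigl(m-E(\uo,\vo)\bigr)$.
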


\begin{proof}
Indeed, assume by contradiction that there exists a sequence $(t_n)$ such that $P(u(t_n))\to 0$ as $n\to+\infty$. Then
\begin{eqnarray*}
T(u(t_n))&=&S(u(t_n))+P(u(t_n))\\
&\leq& E(u(t_n),\ut(t_n))+P(u(t_n)).
\end{eqnarray*}
By the energy inequality in Assumption H this implies
\begin{equation*}
T(u(t_n))\leq E(\uo,\vo)+P(u(t_n))
\end{equation*}
and thus
\begin{equation}\label{eq:33}
T(u(t_n))< m+P(u(t_n))-\n
\end{equation}
with $\n:=m-E(\uo,\vo)>0$ since $(\uo,\vo)\in\I$.
For $n$ large enough we have
$$
0\leq P(u(t_n))<\n/2
$$
and thus (\ref{eq:33}) gives
$$
T(u(t_n))<m-\frac{\n}{2},
$$
which contradicts the result of Lemma \ref{lem:minpeq}.
\end{proof}

The proof of Theorem \ref{thm:instability} relies on the following proposition.

\begin{prop}\label{prop:blow-up}
Let $(\uo,\vo)\in\I$ and $u(t)$ an associated solution of (\ref{eq:nlkgg2}) on
$[0,T)$. Then there exists $(t_n)\subset(0,T)$ such that $\lim_{t_n\to
T}\nhurd{u(t_n)}=+\infty$,
\end{prop}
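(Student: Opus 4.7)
The plan is to argue by contradiction. The case $T<+\infty$ is immediate: the blow-up alternative in Assumption H already furnishes a sequence $(t_n)\subset[0,T)$ with $t_n\to T$ and $\nhurd{u(t_n)}\to +\infty$. So I assume $T=+\infty$ and, to derive a contradiction, suppose that $M:=\sup_{t\geq 0}\nhurd{u(t)}<+\infty$. Under this boundedness, the energy inequality combined with (g2) and the Moser--Trudinger inequality keeps $\nld{\ut(t)}$, $\nld{\nabla u(t)}$ and $P(u(t))$ uniformly bounded on $[0,+\infty)$.

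Next I introduce the virial $F(t):=\nldd{u(t)}$. A formal pairing of (\ref{eq:nlkgg2}) with $\bar u$ (the weak regularity coming from Assumption H is handled by a routine approximation) yields the identity
\begin{equation*}
F''(t)=2\nldd{\ut(t)}-2\nldd{\nabla u(t)}+2\intrd |u(t)|g(|u(t)|)\,dx.
\end{equation*}

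The core of the argument is to show that, for some $\nu>0$ depending on $m-E(\uo,\vo)$ and $\delta$,
\begin{equation*}
F''(t)\geq \nu\quad\text{for all }t\in[0,+\infty).
\end{equation*}
For this I plan to combine three ingredients: the energy inequality $E(u(t),\ut(t))\leq E(\uo,\vo)<m$, the positivity $P(u(t))\geq \delta$ from Lemma \ref{lem:key}, and the variational lower bound $T(u(t))\geq m$ from Lemma \ref{lem:minpeq}. Following Shatah's scheme in \cite{sh} for $N\geq 3$, the above ingredients should be substituted into $F''$ via the energy identity so as to extract a margin of order $m-E(\uo,\vo)$. Two modifications are essential for $N=2$: Pohozaev's identity is degenerate here, so its role is played by the functional $P$ and the variational characterization of Lemma \ref{lem:minpeq}; and the exponentially-growing nonlinearity in $\intrd |u|g(|u|)\,dx$ is handled via the Strauss radial compactness (Lemma \ref{lem:strauss}) and Moser--Trudinger.

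Once $F''(t)\geq\nu>0$ is established, integrating twice gives $F(t)\geq F(0)+F'(0)t+\nu t^2/2\to+\infty$, contradicting $F(t)\leq M^2$. The main obstacle is precisely this uniform lower bound on $F''$: the algebraic manipulation combining the virial identity with the energy inequality and the invariant-set conditions $P(u(t))\geq\delta$, $T(u(t))\geq m$, $E(u(t),\ut(t))<m$ is where the variational characterization provided by Lemma \ref{lem:minpeq} plays the central role, replacing the $N\geq 3$ Pohozaev-based argument of \cite{sh}.
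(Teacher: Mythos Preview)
Your approach has a genuine gap at its central step. Under (g0)--(g3) alone, the second derivative of the $L^2$-virial
\[
F''(t)=2\nldd{\ut(t)}-2\nldd{\nabla u(t)}+2\intrd |u(t)|\,g(|u(t)|)\,dx
\]
cannot be bounded below by a positive constant from the three ingredients you list. The obstruction is the Nehari-type term $\intrd |u|\,g(|u|)\,dx$: hypotheses (g0)--(g3) impose no relation between $sg(s)$ and $G(s)$ (there is no Ambrosetti--Rabinowitz inequality available), so this integral is not controlled by $P(u)=\intrd G(u)\,dx$, $T(u)$ or $E$. Writing $F''=4\nldd{\ut}-4E(u,\ut)+2\big(\intrd |u|g(|u|)\,dx-2P(u)\big)$ makes this explicit: at the ground state $\f$ one has $\intrd |\f|g(|\f|)\,dx=\nldd{\nabla\f}=2m$ and $P(\f)=0$, so for data $(\fe,0)$ one gets $F''(0)\approx 0-4m+4m=0$, and nothing in your list prevents $F''(t)$ from becoming negative afterwards. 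Neither $P(u)\geq\d$ nor $T(u)\geq m$ says anything about $\intrd |u|g(|u|)\,dx$.

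This is also not Shatah's scheme. In \cite{sh}, and in the paper's proof, the multiplier is a truncated dilation generator $\t(t,x)\,x\cdot\nabla u$ with a cutoff satisfying $|\t(t,x)|\leq Ct/\ln t$, not $\bar u$. The point of that choice is that pairing the equation with $x\cdot\nabla\bar u$ produces, after integration by parts, the Pohozaev quantity $\intrd G(u)\,dx=P(u)$ \emph{directly}, so $\intrd |u|g(|u|)\,dx$ never appears. One obtains $2P(u(t))-\eta\leq -\partial_t\,\Re\intrd \t\,\utb\, x\cdot\nabla u\,dx$; Lemma~\ref{lem:key} then gives a uniform lower bound $\d$ on the right-hand side, and integrating against the growth of $\t$ yields $\d\ln t\leq C(1+\nld{\nabla u(t)}\nld{\ut(t)})$, contradicting the assumed $H^1$-bound together with the energy inequality. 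The Strauss decay (Lemma~\ref{lem:strauss}) enters only to control the errors coming from the cutoff $\t$; your $L^2$-virial involves neither a cutoff nor radiality, which is a further sign that it is not the mechanism at work here.
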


\begin{proof}
The proof of Proposition \ref{prop:blow-up} is similar to the proof of Theorem 2.3 in \cite{sh}, thus we just indicate the main steps. First, if $T<+\infty$, the assertion of Proposition \ref{prop:blow-up} is just the blow up alternative in Assumption H. Thus we suppose $T=+\infty$. Following the line of the proof of Theorem 2.3 in \cite{sh}, it is not hard to see that there exists $0<\eta<\d$ (where $\d$ is given by Lemma \ref{lem:key}) such that
\begin{equation}\label{eq:propblowup1}
2\intrd G(u)\,dx-\eta\leq-\frac{\partial}{\partial t}\Re\intrd \t(t,x)\utb x.\nabla u dx
\end{equation}
where $\t:[0,+\infty)\times\Rd\mapsto\R$ is such that
\begin{equation}\label{eq:propblowup2}
|\t(t,x)|\leq Ct/\ln(t)
\end{equation}
for all $(t,x)\in[0,+\infty)\times\Rd$. To combine (\ref{eq:propblowup1}) and Lemma \ref{lem:key} gives
\begin{equation}\label{eq:propblowup3}
\d\leq-\frac{\partial}{\partial t}\Re\intrd \t(t,x)\utb x.\nabla u dx.
\end{equation}
Hence, by integrating (\ref{eq:propblowup3}) we find
\begin{equation}\label{eq:propblowup4}
\d t\leq -\Re\intrd \t(t,x)\utb x.\nabla u dx +\Re\intrd \t(0,x)\overline{\vo}\ x.\nabla \uo dx.
\end{equation}
Now, by (\ref{eq:propblowup2}) and (\ref{eq:propblowup4}) there exists $C>0$ such that
\begin{equation}\label{eq:propblowup5}
\ln(t)\d\leq C(1+\nld{\nabla u(t)}\nld{\ut(t)}).
\end{equation}
But, thanks to the energy inequality $\nld{\ut(t)}$ is bounded, and $\nld{\nabla u(t)}$ is bounded by assumption, therefore, for $t$ large enough we reach a contradiction in (\ref{eq:propblowup5}).
\end{proof}

In dimension $N\geq 3$, it is easily seen that for $\l<1$ the dilatation of a
ground state $\fl(\,\cdot\,):=\f(\frac{\,\cdot\,}{\l})$ gives a sequence of
initial data in $\I$ converging to this ground state. This property, combined
with the equivalent of Proposition \ref{prop:blow-up}, gives immediately the
instability of the ground states in \cite{sh}. This is not the case any more in
dimension $N=2$ where the dilatation $\fl(\,\cdot\,):=\f(\frac{\,\cdot\,}{\l})$
leaves $P$ and $T$ invariant. To overcome this difficulty, we borrow and adapt
an idea of \cite[Proposition 2]{bjt} which consists in using separately (and
successively) a dilatation and a rescaling to get initial data in $\I$ close to
the ground states.

\begin{lem}\label{lem:initialdata}
Let $\f \in H^1(\R^2)$ be a ground state of (\ref{eq:nlsf2}). For all $\e>0$
there exists $\fe$ such that
$$
\nhurd{\f- \fe}<\e,\;S(\fe)<S(\f),\;P(\fe)>0.
$$
\end{lem}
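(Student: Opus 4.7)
My plan is to construct $\fe$ as a two-parameter perturbation of the form $\fe(x) = \flm(x) := \mu\,\f(x/\l)$ with both $\mu > 1$ and $\l > 1$ chosen close to $1$. Two parameters are needed because in dimension $N=2$ the pure dilation $\fl(x) := \f(x/\l)$ leaves $T$ invariant and leaves $P$ equal to zero (by the Pohozaev identity in $N=2$ one has $P(\f) = 0$, hence $P(\fl) = \l^2 P(\f) = 0$), so dilation alone cannot produce elements of $\I$. Multiplying by $\mu$ breaks both invariances simultaneously while keeping $\flm$ radial.

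In dimension $N=2$, an elementary change of variables gives
\begin{equation*}
T(\flm) = \mu^2 T(\f), \qquad P(\flm) = \l^2 \intrd G(\mu \f) \, dy,
\end{equation*}
and $S(\flm) = T(\flm) - P(\flm)$. Since $\f$ solves $-\D \f = g(\f)$, testing the equation against $\f$ yields $\intrd g(\f)\f\,dx = \nldd{\nabla \f} = 2T(\f)$. Combined with $P(\f) = 0$, differentiation in $\mu$ at $\mu = 1$ gives
\begin{equation*}
\frac{\partial}{\partial \mu} P(\flm)\Big|_{\mu=1} = \l^2 \intrd g(\f)\f\,dx = 2 \l^2 T(\f) > 0,
\end{equation*}
\begin{equation*}
\frac{\partial}{\partial \mu} S(\flm)\Big|_{\mu=1} = 2T(\f) - \l^2 \intrd g(\f)\f\,dx = 2 T(\f) (1 - \l^2).
\end{equation*}
The crucial observation is that for \emph{any} fixed $\l > 1$, these derivatives have opposite signs of the right kind: $P$ is strictly increasing in $\mu$ at $\mu=1$ while $S$ is strictly decreasing.

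The construction then proceeds in two steps. First, pick $\l > 1$ sufficiently close to $1$ so that $\nhurd{\f - \fl} < \e/2$, using continuity of dilations in $\hurd$. Next, for this fixed $\l$, pick $\mu > 1$ close enough to $1$ so that simultaneously $\nhurd{\fl - \flm} = (\mu - 1)\nhurd{\fl} < \e/2$, $P(\flm) > 0$, and $S(\flm) < S(\f)$; all three are possible thanks to the sign of the derivatives computed above together with $P(\fl) = 0$ and $S(\fl) = S(\f)$. Setting $\fe := \flm$, the triangle inequality yields $\nhurd{\f - \fe} < \e$ together with the two required variational inequalities.

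The main technical point I anticipate is justifying the differentiability at $\mu = 1$ of $\mu \mapsto \intrd G(\mu \f)\,dx$ (and similarly for the integral involving $g(\mu\f)\f$). In dimension $N=2$ the hypothesis (g2) only gives exponential growth of $g$, so passing the derivative under the integral is not entirely automatic. This should be handled exactly as in the proof of Lemma \ref{lem:minpeq}: use the Moser-Trudinger inequality to build an integrable majorant on a neighborhood of $\mu = 1$ and conclude by dominated convergence; apart from this, everything reduces to the sign analysis above.
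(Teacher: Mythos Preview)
Your proof is correct and is essentially identical to the paper's: the paper also uses the two-parameter family $\flm(\cdot)=\l\f(\cdot/\m)$ (your $\mu$ is their $\l$ and your $\l$ is their $\m$), computes the same derivatives in the amplitude parameter at $1$ using $\intrd g(\f)\overline{\f}\,dx=\nldd{\nabla\f}$, and concludes by taking both parameters slightly larger than $1$. Your explicit two-step choice of parameters and the remark on justifying differentiation under the integral via Moser--Trudinger are not spelled out in the paper but are exactly in its spirit.
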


\begin{proof}
For $\l,\m>0$ consider $\flm(\,\cdot\,):=\l\f(\frac{\,\cdot\,}{\m})$. Then
$$
\frac{\partial}{\partial \l}S(\flm)=\l^2\nldd{\nabla \f}-\m^2\intrd g(\l\f)\overline{\f}dx.
$$
To multiply (\ref{eq:nlsf2}) by $\overline{\f}$ and integrate gives us
$$
\nldd{\nabla\f}=\intrd g(\f)\overline{\f}dx.
$$
Hence, for $\l=1$ we get
$$
\frac{\partial}{\partial \l}S(\flm)\big|_{\l=1}=(1-\m^2)\nldd{\nabla \f}.
$$
Thus, for all $\m>1$, there exists $\lm>0$ such that
$$
\frac{\partial}{\partial \l}S(\flm)<0\mbox{ for }\l\in(1-\lm,1+\lm)
$$
and therefore
\begin{equation}\label{eq:initialdata1}
S(\flm)<S(\f)\mbox{ for }\l\in(1,1+\lm).
\end{equation}
Now,
$$
\frac{\partial}{\partial \l}P(\flm)_{\l=1}=\m^2\intrd g(\f)\overline{\f}dx=\m^2\nldd{\nabla\f}>0.
$$
Thus, for all $\m>0$, there exists $\Lm$ such that
$$
\frac{\partial}{\partial \l}P(\flm)>0\mbox{ for }\l\in(1-\Lm,1+\Lm)
$$
and therefore
\begin{equation}\label{eq:initialdata2}
P(\flm)>0\mbox{ for }\l\in(1,1+\Lm).
\end{equation}
Finally, from (\ref{eq:initialdata1})-(\ref{eq:initialdata2}), for $\l,\m>1$ close enough to $1$
we get the desired result.
\end{proof}

\begin{proof}[Proof of Theorem \ref{thm:instability}]
Let $\e>0$ and $\fe$ given in Lemma \ref{lem:initialdata}. Then $(\fe,0)$ satisfies
$$
E(\fe,0)=S(\fe)<m\mbox{ and }P(\fe)>0,
$$
namely $(\fe,0)\in\I$. Theorem \ref{thm:instability} follows now from Proposition \ref{prop:blow-up}.
\end{proof}

\section{Appendix}

\begin{lem}\label{complex}
Let $m$ denote the least energy level defined in (\ref{eq:m}) and $c$ the
mountain pass level defined in (\ref{eq:c}). Then $m=c$.
\end{lem}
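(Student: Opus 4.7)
The plan is to leverage the real-valued identity $c=m$ already obtained in \cite{jt1,jt2} and pass to the complex setting by a symmetrization argument. Introduce the real analogues $c_\R$ and $m_\R$ of $c$ and $m$, obtained by replacing $\hu$ throughout (\ref{eq:m})--(\ref{eq:c}) with $H^1(\R^N,\R)$, and write $\G_\R$ for the corresponding restriction of $\G$. The reference to \cite{cjs} recalled in the introduction already gives $m=m_\R$, since every complex ground state of (\ref{eq:nlsf1}) has the form $e^{i\t}\tilde{U}$ with $\tilde{U}$ real and positive. Combined with $c_\R=m_\R$ from \cite{jt1,jt2}, the conclusion $c=m$ reduces to proving $c=c_\R$.

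The inequality $c\leq c_\R$ is immediate: any real path $\g\in\G_\R$ automatically belongs to $\G$ and produces identical values of $S$, so the infimum over the larger family $\G$ is no larger. For the reverse inequality $c_\R\leq c$, I would use the symmetrization $v\mapsto|v|$. The two key ingredients are the Kato-type inequality $|\nabla|v||\leq|\nabla v|$ a.e. valid for every $v\in\hu$, and the identity $G(v)=G(|v|)$, built into the definition $G(s)=\int_0^{|s|}g(t)\,dt$. Together they yield $S(|v|)\leq S(v)$ for all $v\in\hu$. If furthermore the map $v\mapsto|v|$ is continuous from $\hu$ into $H^1(\R^N,\R)$, then for any $\g\in\G$ the curve $\tilde\g(t):=|\g(t)|$ lies in $\G_\R$ (note $\tilde\g(0)=0$ and $S(\tilde\g(1))\leq S(\g(1))<0$) and satisfies $\max_{t\in[0,1]}S(\tilde\g(t))\leq\max_{t\in[0,1]}S(\g(t))$. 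Taking the infimum over $\g\in\G$ gives $c_\R\leq c$.

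Chaining the four identities produces $m=m_\R=c_\R=c$, which is the claim. The only delicate technical input is the $\hu$-continuity of $v\mapsto|v|$, a classical fact that can be proved by approximating $|v|$ by $\sqrt{|v|^2+\e^2}-\e$ and passing to the limit $\e\downarrow 0$ via dominated convergence applied to the explicit formulas for the weak gradients. Everything else is a direct comparison of infima.
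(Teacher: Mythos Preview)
Your proof is correct and follows essentially the same approach as the paper's: both reduce to the real-valued identity from \cite{jt1,jt2} (the paper writes $\tilde c=\tilde m$ for your $c_\R=m_\R$), invoke $m=\tilde m$ from \cite{bgk,bl,cjs}, note $c\leq\tilde c$ trivially, and obtain $\tilde c\leq c$ via the symmetrization $\g\mapsto|\g|$ together with $\|\nabla|v|\|_2\leq\|\nabla v\|_2$ and $G(|v|)=G(v)$. You are in fact more careful than the paper in flagging the $\hu$-continuity of $v\mapsto|v|$, which the paper simply takes for granted when asserting $\tilde\g\in\tilde\G$.
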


\begin{proof}
In \cite[Theorem 0.2]{jt1} for $N \geq 2$ and  \cite[Theorem 1.2]{jt2} for
$N=1$ it is shown that when the class $\G$ is replaced by
$$
\tilde{\G}:=\{ \g\in\mathcal{C}([0,1],H^1(\R^N, \R)),\g(0)=0,\,S(\g(1))<0 \}
$$
one has
$$
\tilde{c}:=\inf_{\g\in\G}\max_{t\in[0,1]}S(\g(t)) = \tilde{m}
$$
where $\tilde{m}$ is the least energy level among real valued solutions of
(\ref{eq:nlsf1}). From \cite{bgk,bl,cjs} we know that $\tilde{m} = m$. Also
trivially $c \leq \tilde{c}$. Now for each $\gamma \in \G$ we observe that
setting $\tilde{\gamma}(t) = |\gamma(t)|$ one has
$$ ||\nabla \tilde{\gamma}(t)||_2^2 \leq || \nabla \gamma(t)||_2^2 \quad  \mbox{
and } \int_{\R^N}G(\tilde{\gamma}(t))dx =  \int_{\R^N}G(\gamma(t))dx. $$ Thus
$\tilde{\gamma} \in \tilde{\G}$ and $S(\tilde{\gamma}) \leq S(\gamma). $ This
show that $\tilde{c} \leq c$ and ends the proof.
\end{proof}

Now we prove the upper weakly semicontinuity of $P$. We begin by a convergence
lemma
\begin{lem}\label{lem:bgk}
Let $H\in\mathcal{C}(\R,\R)$ be such that
\begin{itemize}
\item[(H1)] For all $\a>0$ there exists $\Ca>0$ such that $|H(s)|\leq\Ca (e^{\a s^2}-1)$ for all $s\geq1$,
\item[(H2)] $H(s)=o(s^2)$ when $s\to 0$.
\end{itemize}
Let $(u_n)\subset\huradrd$ be a sequence bounded in $\hurd$ such that $u_n\to u$ a.e. Then we have
$$
H(u_n)\to H(u)\mbox{ in }L^1(\Rd).
$$
\end{lem}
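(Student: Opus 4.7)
The plan is to prove $L^1$ convergence by splitting $\Rd = B_R \cup B_R^c$ for a large radius $R$ and controlling the two pieces separately. On the tail $B_R^c$ we will exploit the pointwise decay of radial $H^1$ functions to force $u_n$ into the regime where (H2) applies, and on the bounded piece $B_R$ we will apply Vitali's convergence theorem, using (H1) together with a Moser--Trudinger estimate to obtain equi-integrability.

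\emph{Step 1: the tail.} Let $M:=\sup_n \nhurd{u_n}<\infty$. By Lemma \ref{lem:strauss} we have $|u_n(x)|\leq C M|x|^{-1/2}$ for a.e.\ $x$, uniformly in $n$. Given $\e>0$, assumption (H2) provides $\d>0$ with $|H(s)|\leq \e s^2$ for $|s|\leq\d$. Choosing $R$ so large that $CM R^{-1/2}\leq\d$, we obtain $|u_n|\leq \d$ a.e.\ on $B_R^c$, hence
$$
\int_{B_R^c}|H(u_n)|\,dx \leq \e\nldd{u_n}\leq \e M^2.
$$
The same bound holds for $u$, since by weak $H^1$ compactness (extracting a subsequence if needed) $u\in\huradrd$ with $\nhurd{u}\leq M$ and $u$ satisfies the same Strauss decay.

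\emph{Step 2: the interior.} On $B_R$, which has finite Lebesgue measure, $u_n\to u$ a.e., and by continuity of $H$ we have $H(u_n)\to H(u)$ a.e. It remains to check that $\{H(u_n)\}$ is equi-integrable on $B_R$. For any measurable $A\subset B_R$,
$$
\int_A |H(u_n)|\,dx \leq \Big(\sup_{|s|\leq 1}|H(s)|\Big)|A|+\int_{A\cap\{|u_n|>1\}}|H(u_n)|\,dx.
$$
To the second term we apply (H1) with a parameter $\a>0$ to be fixed, followed by Cauchy--Schwarz and the elementary inequality $(e^{t}-1)^2\leq e^{2t}-1$ for $t\geq 0$:
$$
\int_{A\cap\{|u_n|>1\}}|H(u_n)|\,dx \leq C_\a |A|^{1/2}\Bigl(\int_{B_R}(e^{2\a u_n^2}-1)\,dx\Bigr)^{1/2}.
$$
By the Moser--Trudinger inequality in $\Rd$ (Ruf), if $\a>0$ is chosen so that $2\a M^2$ lies below the critical threshold, then $\int_{\Rd}(e^{2\a u_n^2}-1)\,dx$ is bounded uniformly in $n$. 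Hence $\int_A|H(u_n)|\,dx\to 0$ as $|A|\to 0$, uniformly in $n$, and Vitali's convergence theorem yields $H(u_n)\to H(u)$ in $L^1(B_R)$.

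\emph{Step 3: conclusion.} Combining the two estimates,
$$
\int_{\Rd}|H(u_n)-H(u)|\,dx \leq 2\e M^2 + \int_{B_R}|H(u_n)-H(u)|\,dx,
$$
and letting $n\to\infty$ then $\e\to 0$ finishes the proof. The delicate point, and the only step that really uses the two-dimensional structure, is the equi-integrability in Step 2: one must exploit the flexibility of (H1) to pick $\a$ small enough relative to the uniform $H^1$-bound $M$ so that the Moser--Trudinger integral is finite and uniformly controlled. Everything else is a standard radial/tail argument.
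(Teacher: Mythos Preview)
Your proof is correct and follows the same strategy as the paper's: split $\R^2$ into a tail controlled via the Strauss radial decay and a bounded piece handled by Vitali's theorem, with equi-integrability supplied by a Moser--Trudinger estimate. Your tail argument in Step~1---using (H2) directly once Strauss forces $|u_n|\leq\delta$ on $B_R^c$---is in fact cleaner than the paper's version, and your equi-integrability step differs only cosmetically (Cauchy--Schwarz in place of the de~La~Vall\'ee~Poussin lemma).
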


This lemma was proved in \cite[Lemma 5.2]{bgk2}, the extended version of \cite{bgk}. We recall it here for the sake of completeness.

\begin{proof}[Proof of Lemma \ref{lem:bgk}]
From the continuity of $H$ we have $H(u_n)\to H(u)$ a.e. By a theorem of Vitali
(see, for example, \cite[p 380]{ni}), it is enough to prove
\begin{enumerate}[(i)]
\item for each $\e>0$ there exists $R>0$ such that $\displaystyle\int_{\Rd\setminus \{|x|<R\}}H(u_n)dx<\e$ for all $n\in\N$,
\item for each $\e>0$ there exists $\d>0$ such that $\displaystyle\int_{\{|x-y|<\d\}}H(u_n)dx<\e$ for all $y\in\{x\in\Rd\text{ such that }|x|<R\}$ (\emph{equiintegrability}).
\end{enumerate}

Let $\e>0$ be arbitrary chosen. From (H1)-(H2), for $\a>0$ there exists $\Ca>0$ such that for all $s\in\R$
$$
|H(s)|\leq \a s^2+\Ca (e^{s^2}-1).
$$
Thus, for any $R>0$
$$
\int_{\{|x|>R\}}|H(u_n)|\leq \a\nldd{u_n}+\Ca\int_{\{|x|>R\}} (e^{u_n^2}-1)dx.
$$
On one hand, since $(u_n)$ is bounded in $\ld$ we can take $\a>0$ small enough such that
$$
\a\nldd{u_n}<\frac{\e}{2}.
$$
On the other hand, from Lemma \ref{lem:strauss} there exists $C$ such that
$$
\Ca\int_{\{|x|>R\}} (e^{u_n^2}-1)dx\leq \Ca\int_{\{|x|>R\}} (e^{C|x|^{-1}}-1)dx
$$
and for $R>0$ chosen large enough we have
$$
\Ca\int_{\{|x|>R\}} (e^{C|x|^{-1}}-1)dx<\frac{\e}{2}.
$$
Therefore, (i) is satisfied.

For (ii), we first remark that, by (H1) and Moser-Trudinger inequality, there exists $\a>0$ and $M>0$ such that
$$
\int_{\{|x|<R\}}H(u_n)dx\leq \int_{\{|x|<R\}}e^{\a u_n^2}dx<M\text{ for all }n\in\N
$$
In particular, then $H(u_n)$ is bounded in $L^r(|x|<R)$ for any $1<r<+\infty$.  Hence (ii) holds by de La Vall\'{e}e Poussin equiintegrability lemma.
\end{proof}

\begin{lem}\label{lem:uppersemicontinuity}
The functional $P(v)=\intrn G(v)dx$ is of class $C^1$ and upper weakly
semi-continuous in $\hu$.
\end{lem}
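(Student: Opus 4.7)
The plan is to decompose the integrand $G$ into a quadratic part, to which weak lower semi-continuity of the $L^{2}$-norm applies, and a non-quadratic remainder, to which Lemma \ref{lem:bgk} applies. Setting $H(s):=G(s)+\frac{\rho}{2}s^{2}$, we rewrite
\begin{equation*}
P(v)=-\frac{\rho}{2}\nldd{v}+\intrd H(v)\,dx.
\end{equation*}
My first step would be to check that $H$ satisfies (H1) and (H2) of Lemma \ref{lem:bgk}. Hypothesis (H2) is immediate: from (g1) one has $g(s)=-\rho s+o(s)$, so after integration $G(s)=-\tfrac{\rho}{2}s^{2}+o(s^{2})$, whence $H(s)=o(s^{2})$ at the origin. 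Hypothesis (H1) follows from (g2): the exponential bound $|g(s)|\leq C_{\alpha}e^{\alpha s^{2}}$ integrates to $|G(s)|\leq C_{\alpha}|s|e^{\alpha s^{2}}$, and for $|s|\geq 1$ the polynomial factor $|s|$ and the extra $\tfrac{\rho}{2}s^{2}$ are absorbed into the exponential after enlarging $\alpha$ and adjusting $C_{\alpha}$, giving $|H(s)|\leq C_{\alpha}(e^{\alpha s^{2}}-1)$.

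For the $C^{1}$ regularity, the quadratic part is obviously of class $C^{1}$ on $\hurd$. For $v\mapsto\intrd H(v)\,dx$, the estimate
\begin{equation*}
|H'(s)|=|g(s)+\rho s|\leq \varepsilon|s|+C_{\varepsilon}|s|(e^{\alpha s^{2}}-1),
\end{equation*}
valid for every $\varepsilon,\alpha>0$ by (g1)--(g2), together with the Moser-Trudinger inequality (as invoked in the proof of Lemma \ref{lem:minpeq}) makes all relevant integrals finite on $\hurd$ and justifies passage to the limit both in the difference quotients and in the candidate derivative $h\mapsto\Re\intrd g(v)\overline{h}\,dx$; this yields Fr\'echet differentiability and continuity of $P'$.

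For the upper weak semi-continuity, let $v_{n}\goestoweak v$ in $\hurd$; in the application to Lemma \ref{lem:flowinvariant} the $v_{n}$ are radial, so Lemma \ref{lem:strauss} together with local Rellich compactness yields, up to a subsequence, pointwise convergence $v_{n}\to v$ a.e. Lemma \ref{lem:bgk} then gives $\intrd H(v_{n})\,dx\to\intrd H(v)\,dx$, while weak lower semi-continuity of $\nldd{\cdot}$ provides $\nldd{v}\leq\liminf\nldd{v_{n}}$. Combining,
\begin{equation*}
\limsup_{n\to\infty}P(v_{n})=-\frac{\rho}{2}\liminf_{n\to\infty}\nldd{v_{n}}+\lim_{n\to\infty}\intrd H(v_{n})\,dx\leq P(v),
\end{equation*}
and a standard subsequence-of-subsequences argument upgrades this bound to the original sequence.

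The principal obstacle is the borderline nature of dimension $N=2$: the nonlinearity sits exactly at the critical exponential growth allowed in $\hurd$, so every estimate in (H1) and in the computation of $P'$ must be made sharp enough for the Moser-Trudinger inequality to remain applicable. Outside this regime---for instance, on non-radial $\hurd$---one would lose the a.e. pointwise convergence away from compact sets, and a genuinely different tail control would be required.
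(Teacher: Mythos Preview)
Your proof is correct and follows essentially the same route as the paper: decompose $G(s)$ into a negative quadratic part (whose integral is upper weakly semi-continuous by weak lower semi-continuity of the $L^{2}$-norm) plus a remainder $H$ satisfying (H1)--(H2), then invoke Lemma~\ref{lem:bgk} to obtain strong $L^{1}$-convergence of $\intrd H(v_{n})\,dx$. The paper dismisses the $C^{1}$-regularity as standard and does not spell out the verification of (H1)--(H2); you supply these details and also make explicit the radial restriction needed for Lemma~\ref{lem:bgk} to apply.
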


\begin{proof}
It is standard to show that under (g2), $P \in \mathcal{C}^1(\hu,\R)$. Now let
$v_n\goestoweak v$ in $\hu$. Using (g1)-(g2), we can decompose $G$ in
$$
G(s)=-\r s^2+H(s)
$$
where $H$ satisfies the hypothesis of Lemma \ref{lem:bgk}. Hence
$$
\intrn H(v_n)dx\to\intrn H(v)dx \text{ when }n\to +\infty.
$$
Since $v\to -\nld{v}$ is upper weakly semicontinuous, this conclude the proof.
\end{proof}


\begin{thebibliography}{99}

\footnotesize

\bibitem{a}
{\sc R. Adams,}
{\em Sobolev spaces},
Pure and Applied Mathematics, \textbf{65}, Academic Press, (1975).

\bibitem{bc}
{\sc H. Berestycki and T. Cazenave,} {\em Instabilit\'e des
\'etats stationnaires dans les \'equations de Schr\"odinger et de
Klein-Gordon non lin\'eaires}, {C. R. Acad. Sci. Paris} {\bf 293},
(1981), 489--492 and Publications du Laboratoire
d'Analyse Num\'{e}rique, Universit\'e de Paris VI.

\bibitem{bgk}
{\sc H. Berestycki, T. Gallouet and O. Kavian,}
{\em \'Equations de champs scalaires euclidiens non lin\'eaires dans le plan,}
C. R. Acad. Sci. Paris \textbf{297}, (1983), 307-310.

\bibitem{bgk2}
{\sc H. Berestycki, T. Gallouet and O. Kavian,}
{\em \'Equations de champs scalaires euclidiens non lin\'eaires dans le plan,}
Publications du Laboratoire d'Analyse Num\'{e}rique, Universit\'{e} Pierre et Marie Curie.

\bibitem{bl}
{\sc H. Berestycki and P.L. Lions,} {\em Nonlinear \ scalar \  field\ equations
I,} {Arch. Ration. Mech. Anal.,} {\bf 82}, (1983), 313--346.

\bibitem{bjt}
{\sc J. Byeon, L. Jeanjean and K. Tanaka,}
{\em Standing waves for nonlinear Schr\"{o}dinger equations with a general nonlinearity: one and two dimensional cases},
\textsf{to appear in} Comm. Partial Differential Equations.

\bibitem{c}
{\sc T. Cazenave,} {\em Semilinear Schr{\"o}dinger equations},
Courant Lecture Notes in Mathematics, {\bf 10}, (2003).

\bibitem{ch}
{\sc T. Cazenave and A. Haraux,}
{\em An introduction to semilinear evolution equations},
Oxford Lecture Series in Mathematics and its Applications, \textbf{13}, Oxford University Press, (1998).

\bibitem{cl}
{\sc T. Cazenave and P.L. Lions,} {\em Orbital stability of standing waves for
some nonlinear Schr\"odinger equations}, {Comm. Math. Phys.} {\bf 85}, 4,
(1982), 549--561.

\bibitem{cg}
{\sc J. Chen and B. Guo,}
{\em Strong instability of standing waves for a nonlocal Schr\"odinger equation},
{Phys. D} \textbf{227}, (2007), 142--148.

\bibitem{cjs}
{\sc S. Cingolani, L. Jeanjean and S. Secchi,} 
{\em Multi-peak solutions for magnetic NLS equations without non-degeneracy condition}, 
\textsf{preprint}.

\bibitem{cgm}
{\sc S. Coleman, V. Glaser and A. Martin,}
{\em Action minima among solutions to a class of Euclidean scalar field equations},
Comm. Math. Phys. \textbf{58}, 2, (1978), 211--221.

\bibitem{jt1}
{\sc L. Jeanjean and K. Tanaka,}
{\em A remark on least energy solutions in $R\sp N$},
Proc. Amer. Math. Soc. \textbf{131}, 8, (2003), 2399--2408.


\bibitem{jt2}
{\sc L. Jeanjean and K. Tanaka,}
{\em A note on a mountain pass characterization of least energy solutions},
Adv. Nonlinear Stud. \textbf{3}, 4, (2003), 445--455.

\bibitem{le1}
{\sc S. Le Coz,}
{\em A note on Berestycki-Cazenave's classical instability result
for nonlinear Schr\"odinger equations},
\textsf{preprint}.

\bibitem{li}
{\sc Y. Liu,}
{\em Strong instability of solitary-wave solutions to a Kadomtsev-Petviashvili equation in three dimensions},
J. Differential Equations \textbf{180}, 1, (2002), 153--170.

\bibitem{lot}
{\sc Y. Liu, M. Ohta and G. Todorova,}
{\em Strong instability of solitary waves for nonlinear Klein-Gordon equations and generalized Boussinesq equations}, Ann. Henri Poincar\'e \textbf{24}, 4, (2007), 539--548.

\bibitem{lww}
{\sc Y. Liu, X.-P. Wang and K. Wang,}
{\em Instability of standing waves of the Schr\"{o}dinger equation with inhomogeneous nonlinearity},
Trans. Amer. Math. Soc. \textbf{358}, (2006), 2105--2122.

\bibitem{n}
{\sc H. Nawa,}
{\em Asymptotic profiles of blow-up solutions of the nonlinear Schr\"{o}dinger equation with critical power nonlinearity},
J. Math. Soc. Japan  \textbf{46}, 4, (1994), 557--586.

\bibitem{ni}
{\sc O. A. Nielsen,}
{\em An introduction to integration and measure theory},
Canadian Mathematical Society Series of Monographs and Advanced Texts, Wiley-Interscience, (1997).

\bibitem{ot1}
{\sc M. Ohta and G. Todorova,}
{\em Strong instability of standing waves for nonlinear {K}lein-{G}ordon equations},
Discrete Contin. Dyn. Syst.
\textbf{12}, 2, (2005), 315--322.

\bibitem{ot2}
{\sc M. Ohta and G. Todorova,}
{\em Strong instability of standing waves for the nonlinear Klein-Gordon equation and the Klein-Gordon-Zakharov system},  SIAM J. Math. Anal. \textbf{38}, 6, (2007), 1912--1931.

\bibitem{sh}
{\sc J. Shatah,}
{\em Unstable ground state of nonlinear Klein-Gordon equations},
Trans. Amer. Math. Soc.  \textbf{290}, 2, (1985), 701--710.

\bibitem{ss}
{\sc J. Shatah and W. Strauss,}
{\em Instability of nonlinear bound states},
Comm. Math. Phys.  \textbf{100}, 2, (1985),  173--190.

\bibitem{st}
{\sc W. Strauss,}
{\em Existence of solitary waves in higher dimensions},
Comm. Math. Phys.  \textbf{55}, 2, (1977), 149--162.

\end{thebibliography}
\end{document}